\renewcommand*\env@matrix[1][c]{\hskip -\arraycolsep
  \let\@ifnextchar\new@ifnextchar
  \array{*\c@MaxMatrixCols #1}}
\theoremstyle{plain}  
\newtheorem{theorem}{Theorem}[section]
\newtheorem{lemma}[theorem]{Lemma}
\newtheorem{proposition}[theorem]{Proposition}
\newtheorem{corollary}[theorem]{Corollary}
\newtheorem{definition}[theorem]{Definition}
\theoremstyle{definition}
\theoremstyle{remark} 
\newtheorem{example}{Example}[section]
\newtheorem{remark}{Remark}[section]
\definecolor{Pink}{RGB}{225,0,127}
\long\def\aaa#1{{#1}}
\title{\LARGE \bf Stability of Polynomial Differential Equations: Complexity and Converse Lyapunov Questions}
 \author{Amir Ali Ahmadi and Pablo A. Parrilo \thanks{Amir Ali Ahmadi is a Goldstine Fellow at the Department of Business Analytics and Mathematical Sciences of the IBM Watson Research Center. Pablo A. Parrilo is with the Laboratory for Information and Decision Systems, Department of Electrical Engineering and Computer Science, Massachusetts Institute of Technology.\newline Email:
\{\texttt{a\_a\_a}, \texttt{parrilo}\}\texttt{@mit.edu}.  }
\thanks{This research was partially supported by the NSF Focused
Research Group Grant on Semidefinite Optimization and Convex
Algebraic Geometry DMS-0757207.} }
\begin{document}
\date{}
\maketitle


\begin{abstract}
Stability analysis of polynomial differential equations is a central topic in nonlinear dynamics and control which in recent years has undergone major \emph{algorithmic} developments due to advances in optimization theory. Notably, the last decade has seen a widespread interest in the use of sum of squares (sos) based semidefinite programs that can automatically find polynomial Lyapunov functions and produce explicit certificates of stability. However, despite their popularity, the converse question of whether such algebraic, efficiently constructable certificates of stability always exist has remained elusive. Naturally, an algorithmic question of this nature is closely intertwined with the fundamental computational complexity of proving stability. In this paper, we make a number of contributions to the questions of (i) complexity of deciding stability, (ii) existence of polynomial Lyapunov functions, and (iii) existence of sos Lyapunov functions.

(i) We show that deciding local or global asymptotic stability of \aaa{cubic} vector fields is strongly NP-hard. Simple variations of our proof are shown to imply strong NP-hardness of several other decision problems: testing local attractivity of an equilibrium point, stability of an equilibrium point in the sense of Lyapunov, \aaa{invariance of the unit ball}, boundedness of trajectories, convergence of all trajectories in a ball to a given equilibrium point, existence of a quadratic Lyapunov function, local collision avoidance, and existence of a stabilizing control law. 


(ii) We present a simple, explicit example of a globally asymptotically stable quadratic vector field on the plane which does not admit a polynomial Lyapunov function (joint work with M. Krstic and presented here without proof). For the subclass of homogeneous vector fields, we conjecture that asymptotic stability implies existence of a polynomial Lyapunov function, but show that the minimum degree of such a Lyapunov function can be arbitrarily large even for vector fields in fixed dimension and degree. For the same class of vector fields, we further establish that there is no monotonicity in the degree of polynomial Lyapunov functions.


(iii) We show via an explicit counterexample that if the degree of the polynomial Lyapunov function is fixed, then sos programming \aaa{may} fail to find a valid Lyapunov function even though one exists. On the other hand, if the degree is allowed to increase, we prove that existence of a polynomial Lyapunov function for a planar or a homogeneous vector field implies existence of a polynomial Lyapunov function that is sos and that the negative of its derivative is also sos. 


\end{abstract}



\newpage

\section{Introduction}

The diversity of application domains in science and engineering where complex dynamical systems are modelled as polynomial differential equations is unsurpassed. Dynamics of population growth in ecology, prices and business cycles in economics, chemical reactions in cell biology, spread of epidemics in network science, and motion of a wide range of electromechanical systems in control engineering, are but few examples of areas where slight deviations from the traditional ``simplifying assumptions of linearity'' leave us with polynomial vector fields. Polynomial systems also enjoy much added interest stemming from the fact that various other types of dynamical systems are often approximated by their Taylor expansions of some order around equilibrium points. A prime example of this is in the field of robotics where equations of motion (described by the manipulator equations) give rise to trigonometric differential equations which are then commonly approximated by polynomials for analysis and control. Aside from practice, polynomial vector fields have also always been at the heart of diverse branches of mathematics: Some of the earliest examples of chaos (e.g. the Lorenz attractor) arise from simple low degree polynomial vector fields~\cite{Lorenz_chaos}; the still-open Hilbert's 16th problem concerns polynomial vector fields on the plane~\cite{Hilbert16}; Shannon's General Purpose Analog Computer is equivalent in simulation power to systems of polynomial differential equations~\cite{Polynomial_Analog_Comp}; and the list goes on.

In many application areas, the question of interest is often not to solve for particular solutions of these differential equation numerically, but rather understand certain qualitative properties of the trajectories as a whole. Among the different qualitative questions one can ask, the question of \emph{stability} of equilibrium points is arguably the most ubiquitous. Will deviations from equilibrium prices in the market be drawn back to the equilibrium? Will the epidemic die out or become widespread? We are concerned for the most of part of this paper with this stability question, which we study from an algorithmic viewpoint. Our goal is to shed light on the complexity of this question and to understand the power/limitations of some of the most promising optimization-based computational techniques currently available for tackling this problem.

Almost universally, the study of stability in dynamical systems leads
to Lyapunov's second method or one of its many variants. An
outgrowth of Lyapunov's 1892 doctoral
dissertation~\cite{PhD:Lyapunov}, Lyapunov's second method tells
us, roughly speaking, that if we succeed in finding a
\emph{Lyapunov function}---an energy-like function of the state
that decreases along trajectories---then we have proven that the
dynamical system in question is stable. In the mid 1900s, a series
of \emph{converse Lyapunov theorems} were developed which
established that any stable system indeed has a Lyapunov function
(see~\cite[Chap. 6]{Hahn_stability_book} for an overview).
Although this is encouraging, except for the simplest classes of
systems such as linear systems, converse Lyapunov theorems do not
provide much practical insight into how one may go about finding a
Lyapunov function.


In the last few decades however, advances in the theory and
practice of convex optimization and in particular semidefinite
programming (SDP) have rejuvenated Lyapunov theory. The approach
has been to parameterize a class of Lyapunov functions with
restricted complexity (e.g., quadratics, pointwise maximum of
quadratics, polynomials, etc.) and then pose the search for a
Lyapunov function as a convex feasibility problem. A widely
popular example of this framework which we study in
this paper is the method of sum of squares (sos) Lyapunov
functions~\cite{PhD:Parrilo},\cite{PabloLyap}. Expanding on the
concept of sum of squares decomposition of polynomials and its relation to semidefinite programming, this technique allows one to formulate SDPs that automatically search for polynomial Lyapunov functions for polynomial dynamical systems. Over the last decade, the applicability of sum of squares Lyapunov
functions has been explored in many directions and numerous extensions have been developed to tackle a wide range
of problems in systems and control. We refer the reader to the by
no means exhaustive list of works
\cite{PositivePolyInControlBook},
\cite{AutControlSpecial_PositivePolys}, \cite{ControlAppsSOS},
\cite{PraP03}, \cite{Pablo_Rantzer_synthesis},
\cite{Chest.et.al.sos.robust.stability},
\cite{AAA_PP_CDC08_non_monotonic}, \cite{Erin_Pablo_Contraction},
and references therein. Despite the wealth of research in this area, however, the converse question
of whether a proof of stability via sum of squares Lyapunov functions is always possible has remained elusive.

Our paper speaks to the premise that an algorithmic approach to Lyapunov theory naturally calls for new
converse theorems. Indeed, classical converse Lyapunov theorems
only guarantee existence of Lyapunov functions within very broad
classes of functions (e.g. the class of continuously
differentiable functions) that are a priori not amenable to
computation. So there is the need to know whether Lyapunov
functions belonging to certain more restricted classes of
functions that can be computationally searched over also exist.
For example, do stable polynomial systems admit Lyapunov functions
that are polynomial? What about polynomial functions that can be
found with sum of squares techniques? 

As one would expect, questions of this nature are intrinsically related to the computational complexity of deciding stability. As an example, consider the following fundamental question which to the best of our knowledge is open: Given a polynomial vector field (with rational coefficients), is there an algorithm that can decide whether the origin is a (locally or globally) asymptotically stable equilibrium point? A well-known conjecture of Arnold~\cite{Arnold_Problems_for_Math} (see Section~\ref{sec:complexity.cubic.vec.field}) states that there cannot be such an algorithm; i.e., that the problem is \emph{undecidable}. Suppose now that one could prove that (local or global) asymptotic stability of polynomial vector fields implies existence of a polynomial Lyapunov function, together with a \emph{computable upper bound} on its degree. Such a result would imply that the question of stability is \emph{decidable}. Indeed, given a polynomial vector field and an integer $d$, one could e.g. use the quantifier elimination theory of Tarski and Seidenberg~\cite{Tarski_quantifier_elim},~\cite{Seidenberg_quantifier_elim} to test, in finite time, whether the vector field admits a polynomial Lyapunov function of degree $d$.

Just as a proof of undecidability for stability would rule out existence of polynomial Lyapunov functions of a priori known degree, a hardness result on \emph{time complexity} (e.g. an NP-hardness result) would inform us that we should not always expect to find ``efficiently constructable'' Lyapunov functions (e.g. those based on convex optimization). In fact, much of the motivation behind sum of squares Lyapunov functions as a replacement for polynomial Lyapunov functions is computational efficiency. Even though polynomial functions are finitely parameterized, the computational problem of finding a polynomial that satisfies the Lyapunov inequalities (nonnegativity of the function and nonpositivity of its derivative) is intractable. (This is due to the fact that even the task of checking if a given quartic polynomial is nonnegative is NP-hard~\cite{nonnegativity_NP_hard},~\cite{Minimize_poly_Pablo}). By contrast, when one replaces the two Lyapunov inequalities by the requirement of having a sum of squares decomposition, the search for this (sum of squ\aaa{a}res) Lyapunov function becomes a semidefinite program, which can be solved efficiently e.g. using interior point methods~\cite{VaB:96}.

It is important to also keep in mind the distinctions between establishing hardness results for stability versus answering questions on existence of Lyapunov functions. On one hand, complexity result\aaa{s} have much stronger implications in the sense that they rule out the possibility of \emph{any} (efficient) algorithms for deciding stability, not just those that may be based on a search for a particular family of Lyapunov functions, or based on Lyapunov theory to begin with for that matter. On the other hand, complexity results only imply that ``hard instances'' of the problem should exist asymptotically (when the number of variables is large), often without giving rise to an explicit example or determining whether such instances also exist within reasonable dimensions. For these reasons, a separate study of both questions is granted, but the connections between the two are always to be noted.

\subsection{Contributions and organization of the paper} \label{subsec:contributions}
We begin this paper by a short section on preliminaries (Section~\ref{sec:prelims}), where we formally define some basic notions of interest, such as sum of squares Lyapunov functions, homogeneous vector fields, etc. The next three sections respectively include our main contributions on (i) complexity of testing stability, (ii) existence of polynomial Lyapunov functions, and (iii) existence of sos Lyapunov functions. These contributions are outlined below. We end the paper in Section~\ref{sec:summary.future.work} with a list of open problems.

\paragraph{Complexity results (Section~\ref{sec:complexity.cubic.vec.field}).} After a brief account of a conjecture of Arnold on undecidability of testing stability, we present the following NP-hardness results:
\begin{itemize}
\item We prove that deciding (local or global) asymptotic stability of cubic vector fields is strongly NP-hard even when the vector fields are restricted to be homogeneous (Theorem~\ref{thm:asym.stability.nphard}). Degree three is the minimum degree possible for such a hardness result as far as homogeneous systems are concerned (Lemma~\ref{lem:linear.in.P}). The main challenge in establishing this result is to find a way around the seemingly hopeless task of relating solutions of a combinatorial problem to trajectories of cubic differential equations, to which we do not even have explicit access.

\item By modifying our reduction appropriately, we further prove that the following decision problems which arise in numerous areas of systems and control theory are also strongly NP-hard (Theorem~\ref{thm:poly.hardness.results}):

\begin{itemize}
\item testing local attractivity of an equilibrium point,
\item testing stability of an equilibrium point in the sense of Lyapunov,
\item testing boundedness of trajectories,
\item testing convergence of all trajectories in a ball to a given equilibrium point,
\item testing existence of a quadratic Lyapunov function,
\item testing local collision avoidance, 
\item testing existence of a stabilizing control law,
\item \aaa{testing invariance of the unit ball,}
\item \aaa{testing invariance of a basic semialgebraic set under linear dynamics.}
\end{itemize}


\end{itemize}

\paragraph{(Non)-existence of polynomial Lyapunov functions (Section~\ref{sec:no.poly.Lyap}).} In this section we study whether asymptotic stability implies existence of a polynomial Lyapunov function, and if so whether an upper bound on the degree of such a Lyapunov function can be given.

\begin{itemize}
\item We present a simple, explicit example of a quadratic differential equation in two variables and with rational coefficients that is globally asymptotically stable but does not have a polynomial Lyapunov function of any degree (Theorem~\ref{thm:GAS.poly.no.poly.Lyap}). This is joint work with Miroslav Krstic~\cite{AAA_MK_PP_CDC11_no_Poly_Lyap} and is presented here without proof.

\item Unlike the general case, we conjecture that for the subclass of homogeneous vector fields (see Section~\ref{sec:prelims} for a definition) existence of a (homogeneous) \emph{polynomial} Lyapunov function is necessary and sufficient for asymptotic stability. This class of vector fields has been extensively studied in the literature on nonlinear control~\cite{Stability_homog_poly_ODE}, \cite{Stabilize_Homog}, \cite{homog.feedback}, \cite{Baillieul_Homog_geometry}, \cite{Cubic_Homog_Planar}, \cite{HomogHomog}, \cite{homog.systems}. Although a polynomial Lyapunov function is conjectured to always exist, we show that the degree of such a Lyapunov function cannot be bounded as a function of the degree and the dimension of the vector field only. (This is in contrast with stable linear systems which always admit quadratic Lyapunov functions.) We prove this by presenting a family of globally asymptotically stable cubic homogeneous vector fields in two variables for which the minimum degree of a polynomial Lyapunov function can be arbitrarily large (Theorem~\ref{thm:no.finite.bound}).

\item For homogeneous vector fields of degree as low as three, we show that there is no monotonicity in the degree of polynomial Lyapunov functions that prove asymptotic stability; i.e., a homogeneous cubic vector field with no homogeneous
polynomial Lyapunov function of some degree $d$ can very well have a homogeneous polynomial Lyapunov function of degree
less than $d$ (Theorem~\ref{thm:no.monotonicity}).

\end{itemize}

\paragraph{(Non)-existence of sum of squares Lyapunov functions (Section~\ref{sec:(non)-existence.sos.lyap}).} The question of interest in this section is to investigate whether existence of a polynomial Lyapunov function for a polynomial vector field implies existence of a sum of squares Lyapunov function (see Definition~\ref{def:sos.Lyap}). Naturally this question comes in two variants: (i) does an sos Lyapunov function of the same degree as the original polynomial Lyapunov function exist? (ii) does an sos Lyapunov function of possibly higher degree exist? Both questions are studied in this section. We first state a well-known result of Hilbert on the gap between nonnegative and sum of squares polynomials and explain why our desired result does not follow from Hilbert's work. Then, we present the following results:

\begin{itemize}

\item In Subsection~\ref{subsec:the.counterexample}, we show via an explicit counterexample that existence of a polynomial Lyapunov function does not imply existence of a sum of squares Lyapunov function of the same degree. This is done by proving infeasibility of a certain semidefinite program.

\item By contrast, we show in Subsection~\ref{subsec:converse.sos.results} that existence of a polynomial Lyapunov function always implies existence of a sum of squares Lyapunov function of possibly higher degree in the case where the vector field is homogeneous (Theorem~\ref{thm:poly.lyap.then.sos.lyap}) or when it is planar and an additional mild assumption is met (Theorem~\ref{thm:poly.lyap.then.sos.lyap.PLANAR}). The proofs of these two theorems are quite simple and rely on powerful
Positivstellensatz results due to Scheiderer (Theorems~\ref{thm:claus} and~\ref{thm:claus.3vars}). 


\end{itemize}

Parts of this paper have previously appeared in several conference papers~\cite{AAA_PP_CDC11_converseSOS_Lyap}, \cite{AAA_MK_PP_CDC11_no_Poly_Lyap}, \cite{AAA_ACC12_Cubic_Difficult}, \cite{ACC13_complexity_CT10}, and in the PhD thesis of the first author~\cite[Chap. 4]{AAA_PhD}.

\section{Preliminaries} \label{sec:prelims}
We are concerned in this paper with a continuous time dynamical system
\begin{equation}\label{eq:CT.dynamics}
\dot{x}=f(x),
\end{equation}
where each \aaa{component} of $f:\mathbb{R}^n\rightarrow\mathbb{R}^n$ is a multivariate polynomial. We assume throughout and without loss of generality that $f$ has an equilibrium point at the origin, i.e., $f(0)=0$. We say that the origin is \emph{stable in the sense of Lyapunov} if $\forall \ \epsilon > 0$, $\exists \ \delta > 0$ such that $||x(0)|| < \delta \implies ||x(t)|| < \epsilon, \forall t > 0.$ The origin is said to be \emph{locally asymptotically stable} (LAS) if it is stable in the sense of Lyapunov and there exists $\epsilon > 0$ such that $||x(0)|| < \epsilon \implies \lim_{t \to \infty} x(t) = 0$. If the origin is stable in the sense of Lyapunov and $\forall x(0)\in\mathbb{R}^n$, we have $\lim_{t \to \infty} x(t) = 0$, then we say that the origin is \emph{globally asymptotically stable} (GAS). This is the notion of stability that we are mostly concerned with in this paper. We know from standard Lyapunov \aaa{theory} (see e.g.~\cite[Chap. 4]{Khalil:3rd.Ed}) that if we find a radially unbounded (Lyapunov) function $V(x):\mathbb{R}^n\rightarrow\mathbb{R}$ that vanishes at the origin and satisfies
\begin{eqnarray}
V(x)&>&0\quad \forall x\neq0 \label{eq:V.positive} \\
\dot{V}(x)=\langle\nabla V(x),f(x)\rangle&<&0\quad \forall x\neq0,
\label{eq:Vdot.negative}
\end{eqnarray}
then the origin of (\ref{eq:CT.dynamics}) is GAS. Here, $\dot{V}$ denotes the time derivative of $V$ along the
trajectories of (\ref{eq:CT.dynamics}), $\nabla V(x)$ is the
gradient vector of $V$, and $\langle .,. \rangle$ is the standard
inner product in $\mathbb{R}^n$. Similarly, existence of a Lyapunov function satisfying the two inequalities above on some ball around the origin would imply LAS. A real valued function $V$ satisfying the inequality in (\ref{eq:V.positive}) is said to be \emph{positive definite}. We say that $V$ is \emph{negative definite} if $-V$ is positive definite.

As discussed in the introduction, for stability analysis of polynomial systems it is most common (and quite natural)
to search for Lyapunov functions that are polynomials themselves. When such a candidate Lyapunov function is sought, the conditions in (\ref{eq:V.positive}) and (\ref{eq:Vdot.negative}) are two polynomial positivity conditions that $V$ needs to satisfy. The computational intractability of testing existence of such $V$ (when at least one of the two polynmials in (\ref{eq:V.positive}) or (\ref{eq:Vdot.negative}) have degree four or larger) leads us to the notion of sum of squares Lyapunov functions. Recall that a polynomial $p$ is a \emph{sum of squares} (sos), if it can be written as a sum of squares of polynomials, i.e., $p(x)=\sum_{i=1}^r q_i^2(x)$ for some integer $r$ and polynomials $q_i$.

\begin{definition}\label{def:sos.Lyap}
We say that a polynomial $V$ is a \emph{sum of squares Lyapunov function} for the polynomial vector field $f$ in (\ref{eq:CT.dynamics}) if the following two sos decomposition constraints are satisfied:
\begin{eqnarray}
V& \mbox{sos}\label{eq:V.SOS} \\
-\dot{V}=-\langle\nabla V,f\rangle& \mbox{sos}.
\label{eq:-Vdot.SOS}
\end{eqnarray}
\end{definition}

A sum of squares decomposition is a sufficient condition for polynomial nonnegativity\footnote{A polynomial $p$ is \emph{nonnegative} if $p(x)\geq 0$ for all $x\in\mathbb{R}^n$.} that can be checked with semidefinite programming. We do not present here the semidefinite program that decides if a given polynomial is sos since it has already appeared in several places. The unfamiliar reader is referred to~\cite{sdprelax},\aaa{~\cite{PabloGregRekha_BOOK}}. For a fixed degree of a polynomial Lyapunov candidate $V$, the search for the coefficients of $V$ subject to the constraints (\ref{eq:V.SOS}) and (\ref{eq:-Vdot.SOS}) is also a semidefinite program (SDP). The size of this SDP is polynomial in the size of the coefficients of the vector field $f$. Efficient algorithms for solving this SDP are available, for example those base on interior point methods~\cite{VaB:96}.

We emphasize that Definition~\ref{def:sos.Lyap} is the sensible definition of a sum of squares Lyapunov function and not what the name may suggest, which is a Lyapunov function that is a sum of squares. Indeed, the underlying semidefinite program will find a Lyapunov function $V$ if and only if $V$ satisfies \emph{both} conditions (\ref{eq:V.SOS}) and (\ref{eq:-Vdot.SOS}). We shall also remark that we think of the conditions in (\ref{eq:V.SOS}) and (\ref{eq:-Vdot.SOS}) as sufficient conditions
for the inequalities in (\ref{eq:V.positive}) and (\ref{eq:Vdot.negative}). Even though an sos polynomial can be merely nonnegative (as opposed to positive definite), when the underlying semidefinite programming feasibility problems are solved by interior point methods, solutions that are returned are generically positive definite; see the discussion in~\cite[p. 41]{AAA_MS_Thesis}. Later in the paper, when we want to prove results on existence of sos Lyapunov functions, we certainly require the proofs to imply existence of \emph{positive definite} sos Lyapunov functions.

Finally, we recall some basic facts about homogeneous vector fields. A real-valued function $p$ is said to be \emph{homogeneous} (of degree $d$) if it satisfies $p(\lambda x)=\lambda^d p(x)$ for any scalar $\lambda\in\mathbb{R}$. If $p$ is a polynomial, this condition is equivalent to all monomials of $p$ having the same degree $d$. It is easy to see that homogeneous polynomials are closed under sums and products and that the gradient of a homogeneous polynomial has entries that are homogeneous polynomials. A polynomial vector field $\dot{x}=f(x)$ is \emph{homogeneous} if all entries of $f$ are homogeneous polynomials of the same degree (see the vector field in (\ref{eq:poly8}) for an example). \aaa{Linear systems, for instance, are homogeneous polynomial vector fields of degree one. There is a large literature in nonlinear control theory on homogeneous vector fields~\cite{Stability_homog_poly_ODE}, \cite{Stabilize_Homog},
\cite{homog.feedback}, \cite{Baillieul_Homog_geometry},
\cite{Cubic_Homog_Planar}, \cite{HomogHomog},
\cite{homog.systems}}, and some of the results of this paper (both negative and positive) are derived specifically for this
class of systems. (Note that a negative result established for homogeneous polynomial vector fields is stronger than a negative result derived for general polynomial vector fields.) A basic fact about homogeneous systems is that for these vector fields the notions of local and global stability are equivalent. Indeed, a homogeneous vector field of degree $d$ satisfies $f(\lambda x)=\lambda^d f(x)$ for any scalar $\lambda$, and therefore the value of $f$ on the unit sphere determines its
value everywhere. It is also well-known that an asymptotically stable homogeneous system admits a homogeneous Lyapunov
funciton~\cite{Hahn_stability_book},\cite{HomogHomog}.

\section{Complexity of deciding stability for polynomial vector fields}\label{sec:complexity.cubic.vec.field}

The most basic algorithmic question \aaa{one} can ask about stability of equilibrium points of polynomial vector fields is whether \aaa{this} property can be decided in finite time. This is in fact a well-known question of Arnold that appears in~\cite{Arnold_Problems_for_Math}:

\vspace{-5pt}
\begin{itemize}
\item[] ``Is the stability problem for stationary points
algorithmically decidable? The well-known Lyapounov
theorem\footnote{The theorem that Arnold is referring to here is
the indirect method of Lyapunov related to linearization. This is
not to be confused with Lyapunov's direct method (or the second
method), which is what we are concerned with in sections that
follow.} solves the problem in the absence of eigenvalues with
zero real parts. In more complicated cases, where the stability
depends on higher order terms in the Taylor series, there exists
no algebraic criterion.

Let a vector field be given by polynomials of a fixed degree, with
rational coefficients. Does an algorithm exist, allowing to
decide, whether the stationary point is stable?''
\end{itemize}
\vspace{-5pt}

Later in~\cite{Costa_Doria_undecidabiliy}, the question of Arnold
is quoted with more detail:

\vspace{-5pt}
\begin{itemize}
\item[] ``In my problem the coefficients of the polynomials of
known degree and of a known number of variables are written on the
tape of the standard Turing machine in the standard order and in
the standard representation. The problem is whether there exists
an algorithm (an additional text for the machine independent of
the values of the coefficients) such that it solves the stability
problem for the stationary point at the origin (i.e., always stops
giving the answer ``stable'' or ``unstable'').

I hope, this algorithm exists if the degree is one.  It also
exists when the dimension is one. My conjecture has always been
that there is no algorithm for some sufficiently high degree and
dimension, perhaps for dimension $3$ and degree $3$ or even $2$. I
am less certain about what happens in dimension $2$. Of course the
nonexistence of a general algorithm for a fixed dimension working
for arbitrary degree or for a fixed degree working for an
arbitrary dimension, or working for all polynomials with arbitrary
degree and dimension would also be interesting.''
\end{itemize}
\vspace{-5pt}

To our knowledge, there has been no formal resolution to these
questions, neither for the case of stability in the sense of
Lyapunov, nor for the case of asymptotic stability (in its local
or global version). In~\cite{Costa_Doria_undecidabiliy}, da Costa
and Doria show that if the right hand side of the differential
equation contains elementary functions (sines, cosines,
exponentials, absolute value function, etc.), then there is no
algorithm for deciding whether the origin is stable or unstable.
They also present a dynamical system in~\cite{Costa_Doria_Hopf}
where one cannot decide whether a Hopf bifurcation will occur or
whether there will be parameter values such that a stable fixed
point becomes unstable. In earlier work, Arnold himself
demonstrates some of the difficulties that arise in stability
analysis of polynomial systems by presenting a parametric
polynomial system in $3$ variables and degree $5$, where the
boundary between stability and instability in parameter space is
not a semialgebraic set~\cite{Arnold_algebraic_unsolve}. \aaa{Similar approaches have been taken in the systems theory literature to show so-called ``algebraic unsolvability'' or ``rational undecidability'' of some fundamental stability questions in controls, such as that of simultaneous stabilizability of three linear systems~\cite{Blondel_Undecid_3Plants}, or stability of a pair of discrete time linear systems under arbitrary switching~\cite{JSR_not_Algebraic_Kozyakin}.} A relatively larger number of undecidability results are available
for questions related to other properties of polynomial vector
fields, such as
reachability~\cite{Undecidability_vec_fields_survey} or
boundedness of domain of
definition~\cite{Bounded_Defined_ODE_Undecidable}, or for
questions about stability of hybrid systems~\cite{TsiLinSat},
\cite{BlTi2}, \cite{BlTi_stab_contr_hybrid},
\cite{Deciding_stab_mortal_PWA}. We refer the interested reader to
the survey papers in~\cite{Survey_CT_Computation},
\cite{Undecidability_vec_fields_survey},
\cite{Sontag_complexity_comparison},
\cite{BlTi_complexity_3classes}, \cite{BlTi1}.

While we are also unable to give a proof of undecidability of testing stability, we can establish the following result which gives a lower bound on the complexity of the problem.

\begin{theorem}\label{thm:asym.stability.nphard}
Deciding (local or \aaa{global}) asymptotic stability of \aaa{cubic} polynomial
vector fields is strongly NP-hard. \aaa{This is true even when the vector field is restricted to be homogeneous.}
\end{theorem}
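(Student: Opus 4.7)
The plan is to reduce from a strongly NP-hard problem on positivity of homogeneous quartic forms. Specifically, it is a classical consequence of the Motzkin--Straus formula together with the strong NP-hardness of the maximum stable set (or clique) problem that deciding positive definiteness of a homogeneous quartic form $p(x_1,\dots,x_n)$ is strongly NP-hard. Given such a form $p$, I would associate to it the cubic homogeneous vector field
\begin{equation*}
\dot{x}=-\nabla p(x),
\end{equation*}
i.e.\ the negative gradient flow of $p$, and establish the equivalence: the origin of this system is asymptotically stable if and only if $p$ is positive definite. Because the vector field is homogeneous, local and global asymptotic stability coincide, so the single reduction covers both formulations of the theorem. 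This construction is what circumvents the authors' stated obstacle of trying to relate combinatorial structure to trajectories of cubic ODEs that we cannot solve explicitly: the polynomial $p$ itself acts as the Lyapunov-type certificate on both sides of the equivalence.

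For the direction $p$ positive definite $\Rightarrow$ GAS, I would take $V(x)=p(x)$ as a candidate Lyapunov function; then $V$ is radially unbounded and strictly positive on $\mathbb{R}^n\setminus\{0\}$, and
\begin{equation*}
\dot V(x)=\langle \nabla p(x),-\nabla p(x)\rangle=-\|\nabla p(x)\|^2.
\end{equation*}
Euler's identity for degree-four homogeneous polynomials gives $\langle \nabla p(x),x\rangle=4p(x)>0$ on $\mathbb{R}^n\setminus\{0\}$, forcing $\nabla p(x)\neq 0$ there, so $\dot V<0$ off the origin and the origin is GAS. For the converse, assume $p$ is not positive definite. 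If $p\geq 0$ everywhere but $p(x^*)=0$ for some $x^*\neq 0$, then $x^*$ is a minimum of $p$, so $\nabla p(x^*)=0$ and $x^*$ is a second equilibrium, precluding GAS. If instead $p(x^*)<0$ for some $x^*$, then along the gradient trajectory starting at $x^*$ one has $\tfrac{d}{dt}p(x(t))=-\|\nabla p(x(t))\|^2\leq 0$, so $p(x(t))\leq p(x^*)<0$ for all $t\geq 0$; since $p(0)=0$, this trajectory cannot converge to the origin.

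The main obstacle I foresee is massaging the source NP-hard reduction so that the resulting quartic $p$ falls cleanly into ``positive definite'' versus ``admitting a nonzero point at which $p\leq 0$,'' without leaving a detection gap (e.g.\ positive semidefinite but not positive definite forms for which the critical direction is itself hard to exhibit). A clean way to force this dichotomy is to perturb $p$ by a small term of the form $\varepsilon(\sum_i x_i^2)^2$ with $\varepsilon$ chosen on a strongly polynomial scale, so that YES-instances of the source problem yield strictly positive definite forms while NO-instances yield forms with a strictly negative value at a combinatorially identified point. All remaining items --- homogeneity and cubic degree of $-\nabla p$, and the reduction being strongly polynomial in the bit size of the input --- are immediate from the construction.
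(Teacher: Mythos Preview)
Your approach is essentially identical to the paper's: the key reduction is the negative gradient flow $\dot{x}=-\nabla p(x)$ of a quartic form, and your Lyapunov arguments for both directions of the equivalence (Euler's identity to force $\nabla p\neq 0$ off the origin; monotonicity of $p$ along trajectories to rule out convergence when $p$ takes a nonpositive value) match the paper's proof almost line by line.

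The only substantive difference is the source problem. The paper reduces from \textsc{ONE-IN-THREE 3SAT} rather than Motzkin--Straus/stable set, and this choice is precisely what dissolves the ``detection gap'' you flag: the quartic built from a 3SAT instance is, by construction, either strictly positive definite (unsatisfiable case) or has an explicit zero at a $\{0,1\}$ point (satisfiable case), with no intermediate semidefinite regime to worry about. Your $\varepsilon(\sum_i x_i^2)^2$ perturbation idea can be made to work, but you would need to quantify the gap in the Motzkin--Straus value between YES and NO instances and choose $\varepsilon$ accordingly; the paper's route sidesteps this entirely. Note also that the standard copositivity/Motzkin--Straus reductions most directly yield hardness of \emph{nonnegativity} of quartics, not positive definiteness, so your opening sentence (``it is a classical consequence\ldots that deciding positive definiteness\ldots is strongly NP-hard'') is doing more work than it appears and would itself require the perturbation argument to justify.
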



The implication of the NP-hardness of this problem is that unless
P=NP, it is impossible to design an algorithm that can take as
input the (rational) coefficients of a homogeneous cubic vector
field, have running time bounded by a polynomial in the number of
bits needed to represent these coefficients, and always output the
correct yes/no answer on asymptotic stability. Moreover, the fact
that our NP-hardness result is in the strong sense (as opposed to
weakly NP-hard problems such as KNAPSACK, SUBSET SUM, etc.)
implies that the problem remains NP-hard even if the size (bit
length) of the coefficients is $O(\log n)$, where $n$ is the
dimension. For a strongly NP-hard problem, even a
pseudo-polynomial time algorithm cannot exist unless P=NP.
See~\cite{GareyJohnson_Book} for precise definitions and more
details. It would be very interesting to settle whether for the restricted class of homogeneous vector fields, the stability testing problem is decidable.

Theorem~\ref{thm:asym.stability.nphard} in particular suggests that unless P=NP, we
should not expect sum of squares Lyapunov functions of ``low
enough'' degree to always exist, even when the analysis is
restricted to cubic homogeneous vector fields. The semidefinite
program arising from a search for an sos Lyapunov function of
degree $2d$ for such a vector field in $n$ variables has size in
the order of ${n+d \choose d+1}$. This number is polynomial in $n$
for fixed $d$ (but exponential in $n$ when $d$ grows linearly in
$n$). Therefore, unlike the case of linear systems for which sos quadratic Lyapunov functions always exist, we should not
hope to have a bound on the degree of sos Lyapunov functions that
is independent of the dimension. We postpone our study of existence of sos Lyapunov functions to
Section~\ref{sec:(non)-existence.sos.lyap} and continue for now with our complexity explorations.

Before the proof of Theorem~\ref{thm:asym.stability.nphard} is presented, let us remark that this hardness result is minimal in the degree as far as homogeneous vector fields are concerned. It is not hard to show that for quadratic homogeneous vector fields (and in fact for all even degree homogeneous vector fields), the origin can never be asymptotically stable; see e.g.~\cite[p.
283]{Hahn_stability_book}. This leaves us with homogeneous polynomial vector fields of degree one (i.e., linear systems) for which asymptotic stability can be decided in polynomial time, e.g. as shown in the following \aaa{folklore} lemma.

\begin{lemma} \label{lem:linear.in.P}
There is a polynomial time algorithm for deciding asymptotic stability of the equilibrium point of the linear system $\dot{x}=Ax$.\footnote{This of course is equivalent to the matrix $A$ being Hurwitz, i.e., having all eigenvalues in the open left half complex plane. Theorem~\ref{thm:asym.stability.nphard} suggests that no simple characterization of this type should be possible for cubic vector fields.}
\end{lemma}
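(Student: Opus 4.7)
The plan is to reduce the question to the classical test of whether $A$ is Hurwitz, and then to exhibit a polynomial-time algorithm for that test. Recall that the origin of $\dot x=Ax$ is asymptotically stable iff every eigenvalue of $A$ has strictly negative real part. I would not try to compute eigenvalues explicitly, since over $\mathbb Q$ this is both unnecessary and awkward from a bit-complexity standpoint. Instead, I would lean on the Lyapunov characterization for linear systems: $A$ is Hurwitz iff the matrix equation $A^\top P + PA = -I$ has a (necessarily unique) symmetric solution $P$, and this solution is positive definite.

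Concretely, I would proceed in three steps. First, vectorize $A^\top P + PA = -I$ to obtain a system of $n(n+1)/2$ linear equations in the $n(n+1)/2$ unknown entries of the symmetric matrix $P$, and solve it by Gaussian elimination over $\mathbb Q$; if the system fails to have a unique solution, output \emph{not asymptotically stable}. Second, form the symmetric candidate $P$ and test $P\succ 0$ using, for instance, the signs of its leading principal minors, or an exact Cholesky-style decomposition. Third, output \emph{asymptotically stable} iff the positive definiteness test succeeds.

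Correctness will rest on the standard Lyapunov theorem for linear systems: if $A$ is Hurwitz then for any $Q\succ 0$ the equation $A^\top P + PA = -Q$ admits the unique positive definite solution $P=\int_0^\infty e^{A^\top t}Q e^{At}\,dt$; conversely, existence of some $P\succ 0$ satisfying $A^\top P+PA\prec 0$ forces $A$ to be Hurwitz, since $V(x)=x^\top Px$ is then a quadratic Lyapunov function certifying asymptotic stability of the linear flow. Specializing to $Q=I$ gives the exact equivalence that the algorithm exploits.

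The only point that really needs attention is the bit-complexity bookkeeping, not the conceptual argument. For rational input $A$ of bit size $L$, Edmonds' analysis of Gaussian elimination guarantees that the vectorized Lyapunov system can be solved in time polynomial in $n$ and $L$ and that the resulting rational $P$ has entries of bit size polynomial in $n$ and $L$; this is the step whose routine-looking but nontrivial bit-length control is the main technical obstacle. The positive definiteness test on this $P$ (via leading principal minors, computed with the same exact arithmetic) is likewise polynomial in $n$ and $L$. Chaining these two polynomial-time subroutines yields the desired polynomial-time decision procedure. As an alternative route, one could apply the Routh–Hurwitz criterion to the characteristic polynomial of $A$ (computable in polynomial time, e.g.\ via the Faddeev–LeVerrier recursion), which gives the same complexity guarantee and serves as a useful sanity check.
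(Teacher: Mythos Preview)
Your proposal is correct and follows essentially the same approach as the paper: solve the Lyapunov equation $A^\top P + PA = -I$ as a linear system, then test positive definiteness of $P$ via leading principal minors. You include more careful bit-complexity bookkeeping (Edmonds' analysis) and the Routh--Hurwitz alternative, but the core argument is identical.
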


\begin{proof}
Consider the following algorithm. Given the matrix $A$, solve the following linear system \aaa{(the Lyapunov equation)} for the symmetric matrix $P$: $$A^TP+PA=-I,$$ where $I$ here is the identity matrix of the same dimension as $A$. If there is no solution to this system, output ``no''. If a solution matrix $P$ is found, test whether $P$ is a positive definite matrix. If yes, output ``yes''. If not, output ``no''. The fact that this algorithm is correct is standard. Moreover, the algorithm runs in polynomial time (in fact in $O(n^3)$) since linear systems can be solved in polynomial time, and we can decide if a matrix is positive definite e.g. by checking whether all its $n$ leading \aaa{principal} minors are positive rational numbers. Each determinant can be computed in polynomial time.
\end{proof}

\begin{corollary}\label{cor:local.exp.easy}
\aaa{L}ocal exponential stability\footnote{See~\cite{Khalil:3rd.Ed} for a definition.} of polynomial vector fields of any degree can be decided in polynomial time.
\end{corollary}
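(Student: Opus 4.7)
The plan is to reduce local exponential stability of a polynomial vector field to asymptotic stability of its linearization and then invoke Lemma~\ref{lem:linear.in.P}. The key ingredient, which I would quote from standard nonlinear systems texts (e.g.\ Khalil, Chapter~4), is the following equivalence for any $C^1$ vector field $\dot x = f(x)$ with $f(0)=0$: the origin is locally exponentially stable if and only if the Jacobian matrix $A := \frac{\partial f}{\partial x}(0)$ is Hurwitz, i.e.\ all its eigenvalues lie in the open left half plane. The ``if'' direction is the classical Lyapunov indirect method, and the ``only if'' direction is the standard converse (local exponential stability forces the linearization to have no eigenvalue with nonnegative real part, since otherwise one produces trajectories that either fail to decay or decay sub-exponentially).

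Given this equivalence, the algorithm I would propose is immediate. First, read the coefficients of $f$ from the input and extract the linear part to form the $n \times n$ rational matrix $A$; this takes time linear in the input size, since $A$ is just the matrix of coefficients of the degree-one monomials of $f$. Second, invoke the polynomial-time algorithm from Lemma~\ref{lem:linear.in.P} to decide whether $A$ is Hurwitz (i.e.\ whether the Lyapunov equation $A^T P + P A = -I$ admits a positive definite solution $P$). Output ``yes'' precisely when this test succeeds.

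For correctness: by the equivalence above, the output is ``yes'' iff $A$ is Hurwitz iff the origin of $\dot x = f(x)$ is locally exponentially stable. For complexity: Step~1 is linear time, and Step~2 is $O(n^3)$ in the dimension with all intermediate numbers of polynomial bit-length (solving a rational linear system of size $n^2$ and checking positive definiteness of $P$ via its $n$ leading principal minors, each a polynomially-computable determinant of a rational matrix). Hence the overall procedure runs in polynomial time in the bit-size of the input.

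I do not anticipate any serious obstacle: the only conceptual subtlety is citing the converse direction of the linearization principle (local exponential stability $\Rightarrow$ $A$ Hurwitz) correctly, since for mere asymptotic (as opposed to exponential) stability one cannot draw this conclusion---this is exactly the gap exploited by Theorem~\ref{thm:asym.stability.nphard}, where asymptotic stability can be decided by higher-order terms and is NP-hard already for cubic homogeneous vector fields (whose linearization is identically zero). Flagging this contrast explicitly in the proof would help justify why exponential stability is algorithmically easy while plain asymptotic stability is not.
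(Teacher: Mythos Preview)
Your proposal is correct and follows essentially the same approach as the paper: reduce local exponential stability to Hurwitzness of the linearization via the standard equivalence (cited from Khalil), then invoke Lemma~\ref{lem:linear.in.P}. The paper's proof is just a two-sentence version of what you wrote, without the extra commentary on complexity details and the contrast with Theorem~\ref{thm:asym.stability.nphard}.
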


\begin{proof}
A polynomial vector field is locally exponentially stable if and only if its linearization is Hurwitz~\cite{Khalil:3rd.Ed}. The linearization matrix can trivially be written down in polynomial time and its Hurwitzness can be checked by the algorithm presented in the previous lemma.
\end{proof}

Note that Theorem~\ref{thm:asym.stability.nphard} and Corollary~\ref{cor:local.exp.easy} draw a sharp contrast between the complexity of checking local asymptotic stability versus local exponential stability. Observe also that homogeneous vector fields of degree larger than one can never be exponentially stable (since their linearization matrix is the zero matrix). \aaa{Indeed, the difficulty in deciding asymptotic stability initiates when the linearization of the vector field has eigenvalues on the imaginary axis, so that Lyapunov's linearization test cannot conclusively be employed to test stability.}

We now proceed to the proof of Theorem~\ref{thm:asym.stability.nphard}. The main intuition behind this proof is the
following idea: We will relate the solution of a combinatorial
problem not to the behavior of the trajectories of a cubic vector
field that are hard to get a handle on, but instead to properties
of a Lyapunov function that proves asymptotic stability of this
vector field. As we will see shortly, insights from Lyapunov
theory make the proof of this theorem \aaa{relatively} simple. The reduction
is broken into two steps:
\begin{center}
ONE-IN-THREE 3SAT \\ $\downarrow$ \\ positivity of quartic forms
\\ $\downarrow$\\  asymptotic stability of cubic vector fields
\end{center}


\subsection{Reduction from ONE-IN-THREE 3SAT to positivity of quartic forms}\label{subsec:3SAT_2_positivity}

It is well-known that deciding nonnegativity (i.e., positive semidefiniteness) of
quartic forms is NP-hard. The proof commonly cited in the
literature is based on a reduction from the matrix copositivity
problem~\cite{nonnegativity_NP_hard}: given a symmetric $n \times
n$ matrix $Q$, decide whether $x^TQx\geq0$ for all $x$'s that are
elementwise nonnegative. Clearly, a matrix $Q$ is copositive if
and only if the quartic form $z^TQz$, with
$z_i\mathrel{\mathop:}=x_i^2$, is nonnegative. The original
reduction~\cite{nonnegativity_NP_hard} proving NP-hardness of
testing matrix copositivity is from the subset sum problem and
only establishes weak NP-hardness. However, reductions from the
stable set problem to matrix copositivity are also
known~\cite{deKlerk_StableSet_copositive},
\cite{copositivity_NPhard} and they result in NP-hardness in the
strong sense.

For reasons that will become clear shortly, we are interested in
showing hardness of deciding \emph{positive definiteness} of
quartic forms as opposed to positive semidefiniteness. This is in
some sense even easier to accomplish. A very straightforward
reduction from 3SAT proves NP-hardness of deciding positive
definiteness of polynomials of degree $6$. By using ONE-IN-THREE
3SAT instead, we will reduce the degree of the polynomial from $6$
to $4$.

\begin{proposition}\label{prop:positivity.quartic.NPhard}
It is strongly\footnote{The NP-hardness results of
this section will all be in the strong sense. From here on, we
will drop the prefix ``strong'' for brevity.} NP-hard to decide
whether a homogeneous polynomial of degree~$4$ is positive
definite.
\end{proposition}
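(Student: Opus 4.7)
The plan is to reduce (positive) ONE-IN-THREE 3SAT to the problem of deciding whether a given homogeneous quartic form is positive definite. Recall that ONE-IN-THREE 3SAT, and even its variant with no negated literals, is strongly NP-complete. The appeal of the ONE-IN-THREE variant over ordinary 3SAT is that the clause condition ``exactly one literal is true'' is a \emph{linear} equation in the truth values, which squares cleanly into a homogeneous quadratic. By contrast, standard 3SAT would force us to encode a disjunction of three literals via a degree-3 product, pushing the final polynomial to degree 6.

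The construction I propose is as follows. Let $\phi$ be an instance of ONE-IN-THREE 3SAT with boolean variables $y_1,\dots,y_n$ and clauses $C_1,\dots,C_m$, where each clause $C_j$ involves literals $\ell_1^{(j)},\ell_2^{(j)},\ell_3^{(j)}$. Introduce real variables $x_0,x_1,\dots,x_n$ and, for each literal, set $\tilde\ell_k^{(j)}=x_{i_k^{(j)}}$ if the literal is positive and $\tilde\ell_k^{(j)}=x_0-x_{i_k^{(j)}}$ if it is negated. Define
\[
p_\phi(x_0,x_1,\dots,x_n)\;=\;\sum_{i=1}^n \bigl(x_i^2-x_0 x_i\bigr)^2 \;+\; \sum_{j=1}^m x_0^2\bigl(\tilde\ell_1^{(j)}+\tilde\ell_2^{(j)}+\tilde\ell_3^{(j)}-x_0\bigr)^2.
\]
Each summand is a square of a degree-2 polynomial (resp.\ the product of $x_0^2$ and the square of a linear form), so $p_\phi$ is a sum of squares, in particular nonnegative, and it is manifestly homogeneous of degree exactly $4$.

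The verification step is then to show: $p_\phi$ fails to be positive definite iff $\phi$ is ONE-IN-THREE satisfiable. For the easy direction, if $y\in\{0,1\}^n$ is a satisfying assignment, taking $x_0=1$ and $x_i=y_i$ makes $x_i^2-x_0x_i=y_i^2-y_i=0$ and makes each clause's linear form evaluate to $1=x_0$, so $p_\phi(1,y)=0$ while $(1,y)\neq 0$. For the converse, suppose $p_\phi(x)=0$ with $x\neq 0$. If $x_0=0$, the first sum forces $x_i^4=0$ for every $i$, contradicting $x\neq 0$; thus $x_0\neq 0$, and by homogeneity we may rescale to $x_0=1$. Then $x_i^2=x_i$ forces $x_i\in\{0,1\}$ for every $i$, and the second sum forces $\tilde\ell_1^{(j)}+\tilde\ell_2^{(j)}+\tilde\ell_3^{(j)}=1$ for every $j$, i.e., exactly one literal in each clause evaluates to $1$. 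Hence $\phi$ is satisfied.

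The resulting Karp reduction is clearly polynomial in the size of $\phi$ (and has small coefficients, so it preserves strong NP-hardness); positive definiteness of $p_\phi$ is equivalent to unsatisfiability of $\phi$, which establishes strong NP-hardness of the positive definiteness question. The only real subtlety, which is the reason for introducing the homogenizing variable $x_0$ rather than attempting the construction directly on $\mathbb{R}^n$, is ruling out spurious zeros on the hyperplane $\{x_0=0\}$; the first sum $\sum_i (x_i^2 - x_0 x_i)^2$ is chosen precisely so that at $x_0=0$ it collapses to $\sum_i x_i^4$, which vanishes only at the origin. Everything else is bookkeeping.
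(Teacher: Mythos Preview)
Your proof is correct and is essentially the same reduction as the paper's: the paper first writes down the non-homogeneous quartic $p(x)=\sum_i x_i^2(1-x_i)^2+\sum_j(\text{clause sum}-1)^2$ and then homogenizes via $p_h(x,y)=y^4 p(x/y)$, which yields exactly your $p_\phi$ with $x_0$ in the role of $y$. You also pinpoint the same subtlety the paper flags, namely that $p_h(x,0)=\sum_i x_i^4$ rules out spurious zeros on the hyperplane $\{x_0=0\}$.
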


\begin{proof}
We give a reduction from ONE-IN-THREE 3SAT which is known to be
NP-complete~\cite[p. 259]{GareyJohnson_Book}. Recall that in
ONE-IN-THREE 3SAT, we are given a 3SAT instance (i.e., a
collection of clauses, where each clause consists of exactly three
literals, and each literal is either a variable or its negation)
and we are asked to decide whether there exists a $\{0,1\}$
assignment to the variables that makes the expression true with
the additional property that each clause has \emph{exactly one}
true literal.

To avoid introducing unnecessary notation, we present the
reduction on a specific instance. The pattern will make it obvious
that the general construction is no different. Given an instance
of ONE-IN-THREE 3SAT, such as the following
\begin{equation}\label{eq:reduciton.1-in-3.3sat.instance}
(x_1\vee\bar{x}_2\vee x_4)\wedge (\bar{x}_2\vee\bar{x}_3\vee
x_5)\wedge (\bar{x}_1\vee x_3\vee \bar{x}_5)\wedge (x_1\vee
x_3\vee x_4),
\end{equation}
we define the quartic polynomial $p$ as follows:
\begin{equation}\label{eq:reduction.p}
\begin{array}{lll}
p(x)&=&\sum_{i=1}^5 x_i^2(1-x_i)^2\\ \ &\
&+(x_1+(1-x_2)+x_4-1)^2+((1-x_2)+(1-x_3)+x_5-1)^2
\\ \ &\ &+((1-x_1)+x_3+(1-x_5)-1)^2+(x_1+x_3+x_4-1)^2.
\end{array}
\end{equation}
Having done so, our claim is that $p(x)>0$ for all $x\in
\mathbb{R}^5$ (or generally for all $x\in \mathbb{R}^n$) if and
only if the ONE-IN-THREE 3SAT instance is not satisfiable. Note
that $p$ is a sum of squares and therefore nonnegative. The only
possible locations for zeros of $p$ are by construction among the
points in $\{0,1\}^5$. If there is a satisfying Boolean assignment
$x$ to (\ref{eq:reduciton.1-in-3.3sat.instance}) with exactly one
true literal per clause, then $p$ will vanish at point $x$.
Conversely, if there are no such satisfying assignments, then for
any point in $\{0,1\}^5$, at least one of the terms in
(\ref{eq:reduction.p}) will be positive and hence $p$ will have no
zeros.

It remains to make $p$ homogeneous. This can be done via
introducing a new scalar variable $y$. If we let
\begin{equation}\label{eq:reduction.ph}
p_h(x,y)=y^4 p(\textstyle{\frac{x}{y}}),
\end{equation}
then we claim that $p_h$ (which is a quartic form) is positive
definite if and only if $p$ constructed as in
(\ref{eq:reduction.p}) has no zeros.\footnote{In general,
homogenization does not preserve positivity. For example, as shown
in~\cite{Reznick}, the polynomial $x_1^2+(1-x_1x_2)^2$ has no
zeros, but its homogenization $x_1^2y^2+(y^2-x_1x_2)^2$ has zeros
at the points $(1,0,0)^T$ and $(0,1,0)^T$. Nevertheless,
positivity is preserved under homogenization for the special class
of polynomials constructed in this reduction, essentially because
polynomials of type (\ref{eq:reduction.p}) have no zeros at
infinity.} Indeed, if $p$ has a zero at a point $x$, then that
zero is inherited by $p_h$ at the point $(x,1)$. If $p$ has no
zeros, then (\ref{eq:reduction.ph}) shows that $p_h$ can only
possibly have zeros at points with $y=0$. However, from the
structure of $p$ in (\ref{eq:reduction.p}) we see that
$$p_h(x,0)=x_1^4+\cdots+x_5^4,$$ which cannot be zero (except at
the origin). This concludes the proof.
\end{proof}

\subsection{Reduction from positivity of quartic forms to asymptotic stability of cubic vector fields}
We now present the second step of the reduction and finish the
proof of Theorem~\ref{thm:asym.stability.nphard}.

\begin{proof}[Proof of Theorem~\ref{thm:asym.stability.nphard}]
We give a reduction from the problem of deciding positive
definiteness of quartic forms, whose NP-hardness was established
in Proposition~\ref{prop:positivity.quartic.NPhard}. Given a
quartic form $V\mathrel{\mathop:}=V(x)$, we define the polynomial
vector field
\begin{equation}\label{eq:xdot.cubic.reduction}
\dot{x}=-\nabla V(x).
\end{equation}
Note that the vector field is homogeneous of degree $3$. We claim
that the above vector field is (locally or equivalently globally)
asymptotically stable if and only if $V$ is positive definite.
First, we observe that by construction
\begin{equation}\label{eq:Vdot<=0.always}
\dot{V}(x)=\langle \nabla V(x), \dot{x} \rangle=-||\nabla
V(x)||^2\leq 0.
\end{equation}
Suppose $V$ is positive definite. By Euler's identity for
homogeneous functions,\footnote{Euler's identity\aaa{, $V(x)=\frac{1}{d}x^T\nabla V(x)$,} is easily derived
by differentiating both sides of the equation $V(\lambda
x)~=~\lambda^d V(x)$ with respect to $\lambda$ and setting
$\lambda=1$.} we have $V(x)=\frac{1}{4}x^T\nabla V(x).$ Therefore,
positive definiteness of $V$ implies that $\nabla V(x)$ cannot
vanish anywhere except at the origin. Hence, $\dot{V}(x)<0$ for
all $x\neq 0$. In view of Lyapunov's theorem (see e.g.~\cite[p.
124]{Khalil:3rd.Ed}), and the fact that a positive definite
homogeneous function is radially unbounded, it follows that the
system in (\ref{eq:xdot.cubic.reduction}) is globally
asymptotically stable.

For the converse direction, suppose
(\ref{eq:xdot.cubic.reduction}) is GAS. Our first claim is that
global asymptotic stability together with $\dot{V}(x)\leq 0$
implies that $V$ must be positive semidefinite. This follows from
the following simple argument, which we have also previously
presented in~\cite{AAA_PP_ACC11_Lyap_High_Deriv} for a different
purpose. Suppose for the sake of contradiction that for some
$\hat{x}\in\mathbb{R}^n$ and some $\epsilon>0,$ we had
$V(\hat{x})=-\epsilon<0$. Consider a trajectory $x(t;\hat{x})$ of
system (\ref{eq:xdot.cubic.reduction}) that starts at initial
condition $\hat{x}$, and let us evaluate the function $V$ on this
trajectory. Since $V(\hat{x})=-\epsilon$ and $\dot{V}(x)\leq 0$,
we have $V(x(t;\hat{x}))\leq-\epsilon$ for all $t>0$. However,
this contradicts the fact that by global asymptotic stability, the
trajectory must go to the origin, where $V$, being a form,
vanishes.

To prove that $V$ is positive definite, suppose by contradiction
that for some nonzero point $x^*\in\mathbb{R}^n$ we had
$V(x^*)=0$. Since we just proved that $V$ has to be positive
semidefinite, the point $x^*$ must be a global minimum of $V$.
Therefore, as a necessary condition of optimality, we should have
$\nabla V(x^*)=0$. But this contradicts the system in
(\ref{eq:xdot.cubic.reduction}) being GAS, since the trajectory
starting at $x^*$ stays there forever and can never go to the
\aaa{origin}.\footnote{After a presentation of this work at UCLouvain, P.-A. Absil kindly brought to our attention that similar connections between local optima of real analytic functions and local stability of their gradient systems appear in~\cite{Absil_local_min_local_stability}.}
\end{proof}

\subsection{Complexity of deciding other qualitative properties of the trajectories}

\aaa{We next} establish NP-hardness of several other decision questions associated with polynomial vector fields, which arise in numerous areas of systems and control theory. \aaa{Most of} the results will be rather straightforward corollaries of Theorem~\ref{thm:asym.stability.nphard}. In what follows, whenever a property has to do with an equilibrium point, we take this equilibrium point to be at the origin. The norm $||.||$ is always the Euclidean norm, and the notation $B_r$ denotes the ball of radius $r$; i.e., $B_r\mathrel{\mathop:}=\{x\ |\ ||x||\leq r\}$.

\begin{theorem} [see also~\cite{ACC13_complexity_CT10}]\label{thm:poly.hardness.results}
For polynomial differential equations of degree $d$ (with $d$ specified below), the following properties are NP-hard to decide:\\
(a) $d=3$, \emph{Inclusion of a ball in the region of attraction of an equilibrium point}: $\forall x(0)$ with $||x(0)||\leq 1,$  $$x(t)\rightarrow 0, \ \mbox{as}\ \  t\rightarrow\infty.$$ \\
(b) $d=3$, \emph{Local attractivity of an equilibrium point}: $\exists \delta>0$ such that $\forall x(0)\in B_\delta,$ $$x(t)\rightarrow 0, \ \mbox{as}\ \  t\rightarrow\infty.$$ \\
(c) $d=4$, \emph{Stability of an \aaa{equilibrium} point in the sense of Lyapunov}: $\forall \epsilon>0$, $\exists\delta=~\delta(\epsilon)$ such that $$||x(0)||<\delta\ \Rightarrow ||x(t)||<\epsilon, \ \ \forall t\geq0.$$ \\
(d) $d=3$, \emph{Boundedness of trajectories}: $\forall x(0)$, $\exists r=r(x(0))$ such that
$$||x(t)||<r, \ \ \forall t\geq0.$$ \\
(e) $d=3$, \emph{Existence of a local quadratic Lyapunov function}: $\exists\delta>0$ and a quadratic Lyapunov function $V(x)=x^TPx$ such that $V(x)>0$ for all $x\in B_\delta, \ x\neq0$ (or equivalently $P\succ 0$), and $$\dot{V}(x)<0, \ \ \forall x\in B_\delta, \ x\neq0.$$\\
(f) $d=4$, \emph{Local collision avoidance}: $\exists\delta>0$ such that $\forall x(0)\in B_\delta$, $$x(t)\notin\mathcal{S}, \  \ \forall t\geq0,$$
where $\mathcal{S}$ is a given polytope.\\
(g) $d=3$, \emph{Existence of a stabilizing controller}: There exists a particular (e.g. smooth, or polynomial of fixed degree) control law $u(x)$ that makes the origin of $$\dot{x}=f(x)+g(x)u(x)$$ locally asymptotically stable, where $f$ and $g\neq 0$ here have degrees $3$.\footnote{The size of $g$ and $u$ here are respectively $n\times k$ and $k\times 1$ and the statement is true for any $k\in\{1,\ldots,n\}$. We present the proof for $k=n$; the other proofs are identical.}\\
\aaa{(h) $d=3$, \emph{Invariance of a ball}: $\forall x(0)$ with $||x(0)||\leq 1$, $$||x(t)||\leq 1, \ \ \forall t\geq0.$$\\
(i) $d=1$, \emph{Invariance of a basic semialgebraic set defined by a quartic polynomial}: $\forall x(0)\in\mathcal{S}$, 
$$x(t)\in\mathcal{S}, \ \ \forall t\geq0,$$ where $\mathcal{S}\mathrel{\mathop:}=\{x\ |\ p(x)\leq 1\}$ and $p$ is a given form of degree four.}\\

\end{theorem}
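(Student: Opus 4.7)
The plan is to reduce each item to one of the two NP-hard problems already established in this section: positive definiteness (equivalently positive semidefiniteness) of quartic forms, or asymptotic stability of cubic homogeneous vector fields via Theorem~\ref{thm:asym.stability.nphard}. In almost every case the gadget is the cubic homogeneous gradient system $\dot{x}=-\nabla V(x)$ associated to a homogeneous quartic form $V$, which was the central object of the proof of Theorem~\ref{thm:asym.stability.nphard}. Two identities will do essentially all the work: along trajectories, $\dot V=-\|\nabla V\|^2\le 0$, and Euler's identity gives $\tfrac{d}{dt}\|x\|^2=-2\langle x,\nabla V(x)\rangle=-8V(x)$. Together with the scaling property of homogeneous systems (if $x(t)$ is a trajectory then $\lambda x(\lambda^2 t)$ is the trajectory starting at $\lambda x(0)$), these let us translate algebraic properties of $V$ into qualitative properties of trajectories.

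For (a) and (b), the scaling property makes convergence of a single nonzero trajectory, local attractivity, and inclusion of any ball in the region of attraction all equivalent to global asymptotic stability, which by Theorem~\ref{thm:asym.stability.nphard} is equivalent to $V\succ 0$. For (c) and (d) I would show that, for the cubic gradient system, both Lyapunov stability and boundedness of all trajectories are equivalent to $V$ being positive definite: if $V\succ 0$ then $V$ itself certifies stability and boundedness, whereas if $V(\bar x)<0$ for some $\bar x$, then along the trajectory through $\bar x$, monotonicity of $V$ gives $V(x(t))\le V(\bar x)<0$, so $\tfrac{d}{dt}\|x(t)\|^2\ge -8V(\bar x)>0$ and the trajectory is unbounded; by the scaling property these unbounded trajectories start arbitrarily near the origin, so Lyapunov stability also fails. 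For (e) the same Euler identity gives $\dot W=-8V(x)$ for $W(x)=\|x\|^2$, so $V\succ 0$ implies that $\|x\|^2$ is a valid quadratic Lyapunov function, while $V\not\succ 0$ rules out any Lyapunov function. For (h), on the unit sphere $\tfrac{d}{dt}\|x\|^2=-8V(x)$, so the closed unit ball is invariant iff $V$ is nonnegative on the sphere, i.e.\ iff $V$ is PSD (again strongly NP-hard).

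For (g) I would use a trivial cascade: with state $(x_1,x_2)\in\mathbb R^n\times\mathbb R$, take $\dot x_1=-\nabla V(x_1)$, $\dot x_2=-x_2+u$, and $g=(0,\dots,0,1)^T$. The $x_2$-subsystem is stabilized by $u=0$ and does not influence the $x_1$-subsystem, so closed-loop LAS of the origin is equivalent to asymptotic stability of $\dot x_1=-\nabla V(x_1)$, which is NP-hard. For (i) a direct reduction from PSD of quartic forms suffices: take $\dot x=x$ and $p=-V$, so $p(x(t))=e^{4t}p(x(0))$; the set $\{p\le 1\}$ is invariant iff $p\le 0$ everywhere, iff $V\succeq 0$.

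The subtle case will be (f): ensuring that when $V$ is not PD the unbounded trajectories from arbitrarily small initial conditions actually hit the prescribed polytope $\mathcal S$. If $V\succ 0$, Lyapunov stability places trajectories from a small $B_\delta$ inside a ball disjoint from any polytope chosen away from the origin, so collision avoidance holds. For the converse, I would exploit the fact that the unbounded trajectory identified above satisfies $\tfrac{d}{dt}\|x\|^2\ge c>0$ \emph{while $V(x(t))\le V(\bar x)$}, which confines it to the sublevel set $\{V\le V(\bar x)\}$; by the scaling property, arbitrarily small initial conditions produce trajectories with the same asymptotic ray structure. One would then place $\mathcal S$ as a suitable polytope intersecting this sublevel set far from the origin (possibly after enlarging the ambient space by one ``time-like'' coordinate to get the claimed degree $d=4$ and to gain enough geometric freedom to guarantee intersection). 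This last step---geometrically certifying that the unbounded trajectories truly enter a chosen polytope---is the only place where the reduction requires more than the scaling plus Euler identity template, and is the main technical obstacle in the proof.
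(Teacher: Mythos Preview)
Your Euler-identity template is the right engine, and for (a), (b), (e), (h), (i) your arguments are essentially the paper's. The real gap is in (c) and (d). Your dichotomy ``$V\succ 0$'' versus ``$V(\bar x)<0$ for some $\bar x$'' is not exhaustive: you miss the case where $V$ is positive semidefinite with a nontrivial zero. This is not a corner case but the \emph{only} relevant one, since the quartic forms produced by the ONE-IN-THREE 3SAT reduction are sums of squares by construction and hence always nonnegative. And in that case your claimed equivalence is actually false: when $V\succeq 0$, your own identity $\tfrac{d}{dt}\|x\|^2=-8V(x)\le 0$ shows $\|x(t)\|$ is nonincreasing, so the cubic gradient system \emph{is} Lyapunov stable and \emph{does} have all trajectories bounded, yet is not GAS.

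The paper repairs this by invoking an extra structural fact of the 3SAT reduction---when $V$ is not positive definite there is a nonzero equilibrium $\bar x\in\{0,1\}^n$ of $-\nabla V$---and then perturbing the vector field: to $\dot x=-\nabla V(x)+x^4$ for (c) and (f), and to $\dot x=-\nabla V(x)+x$ for (d). On the ray through $\bar x$ the dynamics reduce to $\dot x=x^4$ (resp.\ $\dot x=x$) and escape to infinity, while in the positive-definite case $V$ remains a valid Lyapunov function because the perturbation has a different homogeneous degree. This is why (c) and (f) are stated at $d=4$. The same $\{0,1\}^n$ ray is what makes (f) work cleanly: the fixed polytope $\mathcal S=\{x:\ x_i\ge 0,\ 1\le\sum_i x_i\le 2\}$ is guaranteed to be crossed by that escaping ray. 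Your sublevel-set idea does not produce a polytope that can be written down in advance of the instance. (A clean salvage of your route for (c) and (d), which would even stay at $d=3$: reduce from \emph{nonnegativity} of quartic forms rather than positive definiteness; then Lyapunov stability and boundedness of the gradient system are each genuinely equivalent to $V\succeq 0$, and your argument goes through verbatim.)

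One smaller point on (g): your $g=(0,\dots,0,1)^T$ is constant, not degree~$3$ as the statement requires. The paper's choice $g(x)=(x_1x_2^2-x_1^2x_2)\,\mathbf 1\mathbf 1^T$ is cubic and vanishes on all of $\{0,1\}^n$, hence at the spurious equilibrium $\bar x$, so no control law can remove it.
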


\begin{proof}

The proofs \aaa{of (a)-(g)} will be via reductions from the problem of testing local asymptotic stability (LAS) of cubic vector fields established in Theorem~\ref{thm:asym.stability.nphard}. These reductions in fact depend on the specific structure of the vector field constructed in the proof of Theorem~\ref{thm:asym.stability.nphard}:
\begin{equation}\label{eq:xdot=f(x)=-gradV(x)}
\dot{x}\mathrel{\mathop:}=f(x)=-\nabla V(x),
\end{equation}
where $V$ is a quartic form. We recall some facts:
\begin{enumerate}
\item The vector field in (\ref{eq:xdot=f(x)=-gradV(x)}) is homogeneous and therefore it is locally asymptotically stable if and only if it is globally asymptotically stable.
\item The vector field is (locally or globally) asymptotically stable if and only if $V$ is positive definite. Hence, the system, if asymptotically stable, by construction always admits a quartic Lyapunov function.
\item If the vector field is \emph{not} asymptotically stable, then there always exists a nonzero point $\bar{x}\in\{0,1\}^n$ such that $f(\bar{x})=0$; i.e., $\bar{x}$ is a nonzero equilibrium point.
\end{enumerate}
We now proceed with the proofs. In what follows, $f(x)$ will always refer to the vector field in (\ref{eq:xdot=f(x)=-gradV(x)}).

(a) The claim is an obvious implication of the homogeneity of $f$. Since $f(\lambda x)=\lambda^3 f(x)$, for all $\lambda\in\mathbb{R}$ and all $x\in\mathbb{R}^n$, the origin is LAS if and only if for any $r$, all trajectories in $B_r$ converge to the origin.\footnote{For a general cubic vector field, validity of property \aaa{(a)} for a particular value of $r$ is of course not necessary for local asymptotic stability. The reader should keep in mind that the class of homogeneous cubic vector fields is a subset of the class of all cubic vector fields, and hence any hardness result for this class immediately implies the same hardness result for all cubic vector fields.}

(b) If $f$ is LAS, then of course it is by definition locally attractive. On the other hand, if $f$ is not LAS, then $f(\bar{x})=0$ for some nonzero $\bar{x}\in\{0,1\}^n$. By homogeneity of $f$, this implies that $f(\alpha\bar{x})=0, \ \forall\alpha\geq 0.$ Therefore, arbitrarily close to the origin we have stationary points and hence the origin cannot be locally attractive.

(c) Let $x^4\mathrel{\mathop:}=(x_1^4,\ldots, x_n^4)^T.$ Consider the vector field
\begin{equation}\label{eq:xdot=f(x)+x^4}
\dot{x}=f(x)+x^4.
\end{equation}
We claim that the origin of (\ref{eq:xdot=f(x)+x^4}) is stable in the sense of Lyapunov if and only if the origin of (\ref{eq:xdot=f(x)=-gradV(x)}) is LAS. Suppose first that (\ref{eq:xdot=f(x)=-gradV(x)}) is not LAS. Then we must have $f(\alpha\bar{x})=0$ for some nonzero $\bar{x}\in\{0,1\}^n$ and $\forall\alpha\geq 0$. Therefore for the system (\ref{eq:xdot=f(x)+x^4}), trajectories starting from \emph{any} nonzero point on the line connecting the origin to $\bar{x}$ shoot out to infinity while staying on the line. (This is because on this line, the dynamics are simply $\dot{x}=x^4.$) As a result, stability in the sense of Lyapunov does not hold as there exists an $\epsilon>0$ (in fact for any $\epsilon>0$), for which $\nexists\delta>0$ such that trajectories starting in $B_\delta$ stay in $B_\epsilon$. Indeed, as we argued, arbitrarily close to the origin we have points that shoot out to infinity. 

Let us now show the converse. If (\ref{eq:xdot=f(x)=-gradV(x)}) is LAS, then $V$ is indeed a strict Lyapunov function for it; i.e. it is positive definite and has a negative definite derivative $-||\nabla V(x)||^2$. Using the same Lyapunov function for the system in (\ref{eq:xdot=f(x)+x^4}), we have $$\dot{V}(x)=-||\nabla V(x)||^2+\langle \nabla V(x), x^4\rangle.$$ Note that the first term in this expression is a homogeneous polynomial of degree $6$ while the second term is a homogeneous polynomial of degree $7$. Negative definiteness of the lower order term implies that there exists a positive real number $\delta$ such that $\dot{V}(x)<0$ for all nonzero $x\in B_\delta$. This together with positive definiteness of $V$ implies via Lyapunov's theorem that (\ref{eq:xdot=f(x)+x^4}) is LAS and hence stable in the sense of Lypunov.

(d) Consider the vector field
\begin{equation}\label{eq:xdot=f(x)+x}
\dot{x}=f(x)+x.
\end{equation}
We claim that the trajectories of (\ref{eq:xdot=f(x)+x}) are bounded if and only if the origin of (\ref{eq:xdot=f(x)=-gradV(x)}) is LAS. Suppose (\ref{eq:xdot=f(x)=-gradV(x)}) is not LAS. Then, as in the previous proof, there exists a line connecting the origin to a point $\bar{x}\in\{0,1\}^n$ such that trajectories on this line escape to infinity. (In this case, the dynamics on this line is governed by $\dot{x}=x$.) Hence, not all trajectories can be bounded. For the converse, suppose that (\ref{eq:xdot=f(x)=-gradV(x)}) is LAS. Then $V$ (resp. $-||\nabla V(x)||^2$) must be positive (resp. negative) definite. Now if we consider system (\ref{eq:xdot=f(x)+x}), the derivative of $V$ along its trajectories is given by $$\dot{V}(x)=-||\nabla V(x)||^2+\langle \nabla V(x), x\rangle.$$ Since the first term in this expression has degree $6$ and the second term degree $4$, there exists an $r$ such that $\dot{V}<0$ for all $x\notin B_r$. This condition however implies boundedness of trajectories; see e.g.~\cite{Higher_Derive_Lagrange_Stability}.

(e) If $f$ is not LAS, then there cannot be any local Lyapunov functions, in particular not a quadratic one. If $f$ is LAS, then we claim the quadratic function $W(x)=||x||^2$ is a valid (and in fact global) Lyapunov function for it. This can be seen from Euler's identity $$\dot{W}(x)=\langle 2x, -\nabla V(x)  \rangle=-8V(x),$$
and by noting that $V$ must be positive definite.

(f) We define our dynamics to be the one in (\ref{eq:xdot=f(x)+x^4}), and the polytope $\mathcal{S}$ to be $$\mathcal{S}=\{x\ |\ x_i\geq 0, 1\leq \sum_{i=1}^n x_i \leq 2\}.$$ 

Suppose first that $f$ is not LAS. Then by the argument given in (e), the system in (\ref{eq:xdot=f(x)+x^4}) has trajectories that start out arbitrarily close to the origin (at points of the type $\alpha\bar{x}$ for some $\bar{x}\in\{0,1\}^n$ and for arbitrarily small $\alpha$), which exit on a straight line to infinity. Note that by doing so, such trajectories must cross $\mathcal{S}$; i.e. there exists a positive real number $\bar{\alpha}$ such that $1\leq \sum_{i=1}^n \bar{\alpha}\bar{x}_i \leq 2.$ Hence, there is no neighborhood around the origin whose trajectories avoid $\mathcal{S}$. 

For the converse, suppose $f$ is LAS. Then, we have shown while proving (e) that (\ref{eq:xdot=f(x)+x^4}) must also be LAS and hence stable in the sense of Lyapunov. Therefore, there exists $\delta>0$ such that trajectories starting from $B_\delta$ do not leave $B_{\frac{1}{2}}$---a ball that is disjoint from $\mathcal{S}$.

(g) Let $f$ be as in (\ref{eq:xdot=f(x)=-gradV(x)}) and $g(x)=(x_1x_2^2-x_1^2x_2)\boldsymbol{1}\boldsymbol{1} ^T$, where $\boldsymbol{1}$ denotes the vector of all ones. The following simple argument establishes the desired NP-hardness result irrespective of the type of control law we may seek (e.g. linear control law, cubic control law, smooth control law, or anything else). If $f$ is LAS, then of course there exists a stabilizing controller, namely $u=0$. If $f$ is not LAS, then it must have an equilibrium point at a nonzero point $\bar{x}\in\{0,1\}^n$. Note that by construction, $g$ vanishes at all such points. Since $g$ is homogeneous, it also vanishes at all points $\alpha\bar{x}$ for any scalar $\alpha$. Therefore, arbitrarily close to the origin, there are equilibrium points that the control law $u(x)$ cannot possibly remove. Hence there is no controller that can make the origin LAS.

\aaa{

The proofs of parts (h) and (i) of the theorem are based on a reduction from the polynomial nonnegativity problem for quartics: given a (homogeneous) degree-4 polynomial $p:\mathbb{R}^n\rightarrow\mathbb{R}$, decide whether $p(x)\geq0, \forall x\in\mathbb{R}^n$? (See the opening paragraph of Subsection~\ref{subsec:3SAT_2_positivity} for references on NP-hardness of this problem.)

(h) Given a quartic form $p$, we construct the vector field $$\dot{x}=-\nabla p(x).$$ Note that the vector field has degree $3$ and is homogeneous. We claim that the unit ball $B_1$ is invariant under the trajectories of this system if and only if $p$ is nonnegative. This of course establishes the desired NP-hardness result. To prove the claim, consider the function $V(x)\mathrel{\mathop:}=||x||^2.$ Clearly, $B_1$ is invariant under the trajectories of $\dot{x}=-\nabla p(x)$ if and only if $\dot{V}(x)\leq 0$ for all $x$ with $||x||=1.$ Since $\dot{V}$ is a homogeneous polynomial, this condition is equivalent to having $\dot{V}$ nonpositive for all $x\in\mathbb{R}^n$. However, from Euler's identity we have $$\dot{V}(x)=\langle 2x, -\nabla p(x) \rangle=-8p(x).$$

(i) Once again, we provide a reduction from the problem of checking nonnegativity of quartic forms. Given a quartic form $p$, we let the set $\mathcal{S}$ be defined as $\mathcal{S}=\{x\ |\ p(x)\leq 1\}$. Let us consider the linear dynamical system $$\dot{x}=-x.$$ We claim that $\mathcal{S}$ is invariant under the trajectories of this linear system if and only if $p$ is nonnegative. To see this, consider the derivative $\dot{p}$ of $p$ along the trajectories of $\dot{x}~=~-~x$ and note its homogeneity. With the same reasoning as in the proof of part (h), $\dot{p}(x)\leq 0$ for all $x\in\mathbb{R}^n$ if and only if the set $\mathcal{S}$ is invariant. From Euler's identity, we have
$$\dot{p}(x)=\langle \nabla p(x), -x \rangle=-4p(x).$$ Note that the role of the dynamics and the gradient of the ``Lyapunov function'' are swapped in the proofs of (h) and (i). 

}

\end{proof}

\begin{remark}
Arguments similar to the one presented in the proof of (g) above can be given to show NP-hardness of deciding existence of a controller that establishes several other properties, e.g., boundedness of trajectories, inclusion of the unit ball in the region of attraction, etc. In the statement of (f), the fact that the set $\mathcal{S}$ is a polytope is clearly arbitrary. This choice is only made because ``obstacles'' are most commonly modeled in the literature as polytopes. We also note that a related problem of interest here is that of deciding, given two polytopes, whether all trajectories starting in one avoid the other. This question is the complement of the usual reachability question, for which claims of undecidability have already appeared~\cite{Undecidability_vec_fields_survey}; see also~\cite{Skolem_Pisot_problem_CT}.
\end{remark}

\begin{remark}
\aaa{In~\cite{ACC13_complexity_CT10}, NP-hardness of testing local asymptotic stability is established also for vector fields that are \emph{trigonometric}. Such vector fields appear commonly in the field of robotics.}
\end{remark}


\section{Non-existence of polynomial Lyapunov functions or degree bounds}\label{sec:no.poly.Lyap}

\subsection{Non-existence of polynomial Lyapunov functions}
As mentioned in the introduction, the question of
global asymptotic stability of polynomial vector fields is
commonly addressed by seeking a Lyapunov function that is
polynomial itself. This approach has become further prevalent over
the past decade due to the fact that we can use sum of squares
techniques to algorithmically search for such Lyapunov functions.
The question therefore naturally arises as to whether existence of
polynomial Lyapunov functions is necessary for global stability of
polynomial systems. \aaa{This question appears explicitly, e.g., in~\cite[Sect. VIII]{Peet.Antonis.converse.sos.CDC}.} In this section, we \aaa{present} a remarkably simple counterexample which shows that the answer to this question is negative. In view of the fact that globally asymptotically stable linear
systems always admit quadratic Lyapunov functions, it is quite
interesting to observe that the following vector field that is
arguably ``the next simplest system'' to consider does not admit a
polynomial Lyapunov function of any degree.

\begin{theorem} [Ahmadi, Krstic, Parrilo,~\cite{AAA_MK_PP_CDC11_no_Poly_Lyap}] \label{thm:GAS.poly.no.poly.Lyap}
Consider the polynomial vector field
\begin{equation}\label{eq:counterexample.sys}
\begin{array}{rll}
\dot{x}&=&-x+xy \\
\dot{y}&=&-y.
\end{array}
\end{equation}
The origin is a globally asymptotically stable equilibrium point,
but the system does not admit a polynomial Lyapunov function.
\end{theorem}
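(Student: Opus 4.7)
The plan is to first verify GAS by solving the system in closed form, then derive a contradiction from the assumption that a polynomial Lyapunov function exists, by exploiting a ``stretching'' phenomenon: trajectories starting with large $y_0$ temporarily swing out to values of $x$ that are \emph{exponentially} large in $y_0$ before being drawn back. Integrating directly, the second equation gives $y(t) = y_0 e^{-t}$, and substituting into $\dot x = x(y-1)$ yields the closed-form solution
\begin{equation*}
x(t) \;=\; x_0\, e^{-t}\, \exp\!\bigl(y_0(1-e^{-t})\bigr).
\end{equation*}
Both coordinates tend to $0$ as $t\to\infty$ for every initial condition, and the crude estimate $|x(t)|\le |x_0|e^{|y_0|}$ together with $|y(t)|\le|y_0|$ gives stability in the sense of Lyapunov, so the origin is GAS.

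Suppose now, for contradiction, that $V$ is a polynomial Lyapunov function satisfying (\ref{eq:V.positive})--(\ref{eq:Vdot.negative}). Fix any $y_1>0$ and consider the initial condition $(1,y_0)$ with $y_0>y_1$. From the explicit solution, at time $t^\ast=\ln(y_0/y_1)$ the trajectory reaches $\bigl(\tfrac{y_1}{y_0}e^{\,y_0-y_1},\,y_1\bigr)$, so monotone decrease of $V$ along trajectories forces
\begin{equation*}
V\!\Bigl(\tfrac{y_1}{y_0}\,e^{\,y_0-y_1},\;y_1\Bigr) \;\le\; V(1,y_0).
\end{equation*}
As $y_0\to\infty$, the right-hand side is a polynomial in $y_0$ and so grows at most polynomially, whereas the first coordinate on the left is exponentially large in $y_0$.

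Because $V$ is positive definite and $y_1>0$, the univariate polynomial $p_{y_1}(x):=V(x,y_1)$ is strictly positive on $\mathbb{R}$, so it is either a positive constant in $x$ or it has even degree $\ge 2$ with positive leading coefficient. In the latter case $p_{y_1}\!\bigl(\tfrac{y_1}{y_0}e^{\,y_0-y_1}\bigr)$ grows like $e^{d(y_0-y_1)}$ for some $d\ge 2$, violating the polynomial bound above; thus $V(\,\cdot\,,y_1)$ must be independent of $x$ for every $y_1>0$. Writing $V(x,y)=\sum_{k} a_k(y)\,x^k$ with each $a_k$ a polynomial in $y$, this says every $a_k$ with $k\ge 1$ has infinitely many roots and hence vanishes identically, so $V(x,y)=W(y)$ for some univariate polynomial $W$. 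But then $\dot V(x,y)=-y\,W'(y)$ vanishes on the entire $x$-axis, contradicting the requirement that $\dot V$ be negative definite. The main insight---and the hardest part to stumble upon---is choosing the right probe trajectory: once the exponential-in-$y_0$ blow-up of $x(t)$ is noticed in the explicit solution, the horizontal-crossing inequality above converts it into a clean obstruction to any polynomial Lyapunov candidate, and the remaining dichotomy-plus-descent argument is routine.
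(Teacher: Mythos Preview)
Your argument is correct. The paper itself omits the proof (deferring to~\cite{AAA_MK_PP_CDC11_no_Poly_Lyap}) but indicates that GAS is established there via a Lyapunov function involving a logarithm, whereas you establish GAS directly from the closed-form solution; both routes are valid and yours is arguably cleaner for this particular system. For the non-existence part, the paper states that the obstruction comes from ``fast growth rates far away from the origin,'' and your probe-trajectory inequality
\[
V\!\Bigl(\tfrac{y_1}{y_0}\,e^{\,y_0-y_1},\;y_1\Bigr)\;\le\;V(1,y_0)
\]
captures exactly this mechanism: an exponential excursion in $x$ against a polynomial bound in $y_0$. The subsequent dichotomy (either $V(\cdot,y_1)$ has positive even degree in $x$, giving exponential growth and a contradiction, or it is constant in $x$ for every $y_1>0$, forcing $V=W(y)$ and $\dot V=-yW'(y)$ to vanish on the $x$-axis) is sound and matches the spirit of the referenced proof.
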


The proof of this theorem is presented in~\cite{AAA_MK_PP_CDC11_no_Poly_Lyap} and omitted from here. \aaa{Global asymptotic stability is established by} presenting a Lyapunov function that involves the logarithm function. Non-existence of polynomial Lyapunov functions has to do with ``fast growth rates'' far away from the origin; see some typical trajectories of this vector field in Figure~\ref{fig:trajectories} and \aaa{reference}~\cite{AAA_MK_PP_CDC11_no_Poly_Lyap} for more details.

\begin{figure}
\centering
\includegraphics[width=.4\columnwidth]{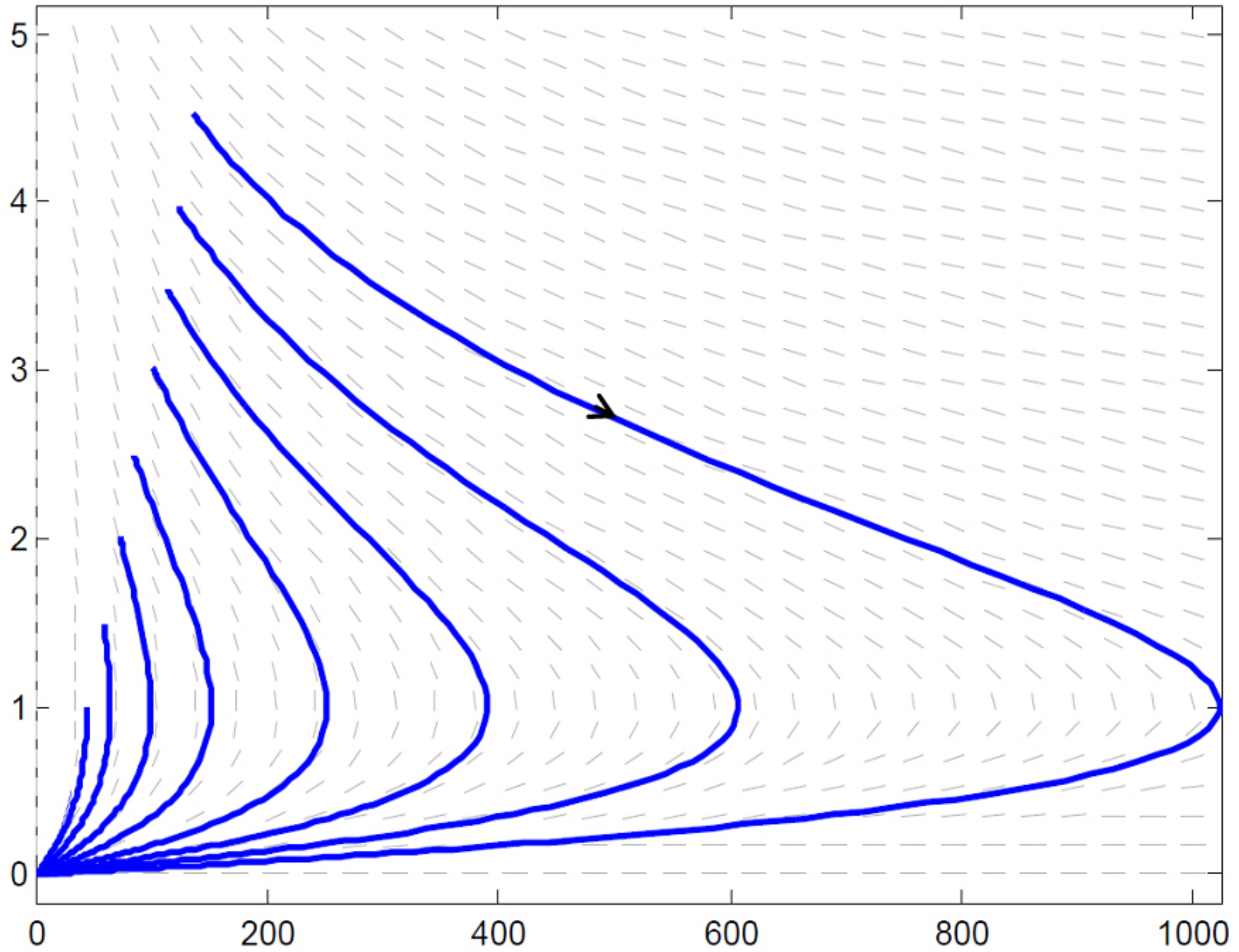}
\caption{Typical trajectories of the vector field in (\ref{eq:counterexample.sys}) starting from initial conditions in the nonnegative orthant.} \label{fig:trajectories}
\end{figure}

\paragraph{Example of Bacciotti and Rosier.} \aaa{An independent counterexample to existence of polynomial Lyapunov functions appears in a book by Bacciotti and Rosier~\cite[Prop.
5.2]{Bacciotti.Rosier.Liapunov.Book}. The differences between the two counterexamples are explained in some detail in~\cite{AAA_MK_PP_CDC11_no_Poly_Lyap}, the main one being that the example of Bacciotti and Rosier relies crucially on a coefficient appearing in the vector field being an \emph{irrational} number and is not robust to perturbations. (In practical applications where
computational techniques for searching over Lyapunov functions on
finite precision machines are used, such issues with irrationality
of the input cannot occur.) On the other hand, the example of Bacciotti and Rosier rules out existence of a polynomial (or even analytic) Lyapunov function \emph{locally}, as the argument is based on slow decay rates arbitrarily close to the origin. In~\cite{Peet.exp.stability}, Peet shows that locally
exponentially stable polynomial vector fields admit polynomial
Lyapunov functions on compact sets. The example of Bacciotti and
Rosier implies that the assumption of exponential stability indeed
cannot be dropped.}

\subsection{Homogeneous vector fields: non-existence of a uniform bound on the degree of polynomial Lyapunov functions in fixed dimension and
degree}\label{sec:no.finite.bound} 

In this subsection, we restrict attention to polynomial vector fields that are homogeneous. Although the results of the last subsection imply that existence of polynomial Lyapunov functions is not necessary for global asymptotic stability of polynomial vector fields, the situation seems to be different for homogeneous vector fields. Although we have not been able to formally prove this, we conjecture the following:

\begin{itemize}
\item[]
Existence of a \emph{homogeneous polynomial Lyapunov function} is necessary and sufficient for global (or equivalently local) asymptotic stability of a \emph{homogeneous polynomial vector field}.
\end{itemize}
\vspace{-5pt}

One reason why we are interested in proving this conjecture is that in Section~\ref{sec:(non)-existence.sos.lyap}, we will show that for homogeneous polynomial vector fields, existence of a homogeneous polynomial Lyapunov function implies existence of a homogeneous \emph{sum of squares} Lyapunov function. Hence, the two statements put together would imply that stability of homogeneous vector fields can always be checked via sum of squares techniques (and hence semidefinite programming).



In this subsection, however, we build on the work of Bacciotti and Rosier~\cite[Prop.
5.2]{Bacciotti.Rosier.Liapunov.Book} to prove that the minimum
degree of a polynomial Lyapunov function for an asymptotically stable homogeneous
vector field can be \emph{arbitrarily large} even when the degree and
dimension are fixed respectively to $3$ and $2$.

\begin{proposition}[{~\cite[Prop.
5.2--a]{Bacciotti.Rosier.Liapunov.Book}}]
\label{prop:Bacciotti.Rosier} Consider the vector field
\begin{equation}\label{eq:Bacciotti.Rosier.f0}
\begin{array}{lll}
\dot{x}&=&-2\lambda y(x^2+y^2)-2y(2x^2+y^2) \\
\dot{y}&=&\ \ 4\lambda x(x^2+y^2)+2x(2x^2+y^2)
\end{array}
\end{equation}
parameterized by the scalar $\lambda>0$. For all values of
$\lambda$ the origin is a center for
(\ref{eq:Bacciotti.Rosier.f0}), but for any irrational value of
$\lambda$ there exists no polynomial function $V$ satisfying
$\dot{V}(x,y)=\frac{\partial{V}}{\partial{x}}\dot{x}+\frac{\partial{V}}{\partial{y}}\dot{y}=0.$
\end{proposition}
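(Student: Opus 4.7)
The plan is to pass to polar coordinates $(r,\theta)$ and exploit the fact that the right-hand side is homogeneous of degree three. A direct computation using $r\dot r = x\dot x + y\dot y$ and $r^2\dot\theta = x\dot y - y\dot x$ collapses the vector field to the clean form
\[
\dot r = \lambda r^{3}\sin(2\theta), \qquad \dot\theta = 2(\lambda+1)\,r^{2}(1+\cos^{2}\theta).
\]
Since $\lambda>0$ and $1+\cos^2\theta \geq 1$, we have $\dot\theta>0$ away from the origin, so every nontrivial trajectory rotates monotonically.

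For the center statement, I would compute $dr/d\theta = \dot r/\dot\theta$ and recognize the resulting expression as $-\frac{\lambda}{2(\lambda+1)}\,\frac{d}{d\theta}\ln(1+\cos^{2}\theta)$. Integrating and using $1+\cos^{2}\theta = (2x^{2}+y^{2})/(x^{2}+y^{2})$ yields the real-analytic first integral
\[
H(x,y) \;=\; (x^{2}+y^{2})(2x^{2}+y^{2})^{\lambda}.
\]
Because $H$ is positive definite and radially unbounded, its level sets are compact simple closed curves enclosing the origin; combined with $\dot\theta>0$, each trajectory traverses such a level set periodically, so the origin is a center for every $\lambda>0$.

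The nonexistence of a nonconstant polynomial first integral when $\lambda$ is irrational I would establish in three steps. First, since $f$ is homogeneous of degree three, if $V_n$ is homogeneous of degree $n$ then $\dot V_n = \nabla V_n\cdot f$ is homogeneous of degree $n+2$; decomposing a candidate $V=\sum_n V_n$ by degree, the pieces $\dot V_n$ live in distinct graded components and cannot cancel. So $\dot V\equiv 0$ forces $\dot V_n\equiv 0$ for each $n$, and it suffices to rule out homogeneous polynomial first integrals of each degree $n$. Second, for such a homogeneous $V$, write $V(x,y)=r^{n}Q(\theta)$ with $Q(\theta):=V(\cos\theta,\sin\theta)$. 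Substituting into $\dot V=0$ produces the first-order linear ODE
\[
n\lambda\,Q(\theta)\sin(2\theta) + 2(\lambda+1)(1+\cos^{2}\theta)\,Q'(\theta) = 0,
\]
whose general solution is $Q(\theta)=c\,(1+\cos^{2}\theta)^{\alpha}$ with $\alpha = n\lambda/(2(\lambda+1))$.

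Third, pulling back via $1+\cos^{2}\theta = (2x^{2}+y^{2})/(x^{2}+y^{2})$ rewrites the only possible candidate as
\[
V(x,y) \;=\; c\,(x^{2}+y^{2})^{n/(2(\lambda+1))}(2x^{2}+y^{2})^{n\lambda/(2(\lambda+1))}.
\]
For $V$ to be a polynomial, both exponents must be nonnegative integers $m$ and $k$; but then $\lambda = k/m$ is rational (the only alternative being $n=m=k=0$, i.e.\ $V$ constant), contradicting the irrationality of $\lambda$. I expect the main obstacle to be the final step: one must argue carefully that the only candidate $Q$ produced by the ODE extends to a genuine polynomial on $\mathbb{R}^{2}$ in exactly the cases just described, and that no other homogeneous polynomial solutions sneak in (for instance via $Q$ vanishing at isolated angles, which the ODE in fact forbids because $1+\cos^{2}\theta>0$). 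Once this extension/polynomiality dichotomy is nailed down, the rational-vs-irrational contradiction is immediate.
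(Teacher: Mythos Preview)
The paper does not actually prove this proposition: it is quoted verbatim from Bacciotti and Rosier's book and used as a black box in the proof of Theorem~\ref{thm:no.finite.bound}. So there is no ``paper's own proof'' to compare against; I can only assess your argument on its merits.

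Your argument is correct and is essentially the natural one. The polar computations check out, and the first integral $H(x,y)=(x^2+y^2)(2x^2+y^2)^\lambda$ you derive is precisely the function the paper later uses (in the proof of Theorem~\ref{thm:no.finite.bound}) as a Lyapunov function for the rotated system. The reduction to homogeneous components via the grading is clean, and the ODE for $Q$ together with the nonvanishing of $1+\cos^2\theta$ correctly forces $Q$ to be either identically zero or a nonzero multiple of $(1+\cos^2\theta)^{\alpha}$.

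The one place that deserves a line of justification is exactly the spot you flag: why must the exponents $m=n/(2(\lambda+1))$ and $k=n\lambda/(2(\lambda+1))$ be nonnegative integers if $V$ is a genuine polynomial? A quick way to nail this down is to restrict to $y=1$: the polynomial $V(x,1)$ equals $c(x^2+1)^m(2x^2+1)^k$, and if $m\notin\mathbb{Z}_{\geq 0}$ the right-hand side has a branch point at $x=i$ (while $(2x^2+1)^k$ is analytic and nonzero there), contradicting analyticity of a polynomial; symmetrically for $k$ at $x=i/\sqrt{2}$. Alternatively, note $m+k=n/2$ and compare complex factorizations. Either way your conclusion $\lambda=k/m\in\mathbb{Q}$ follows, and you are done. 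You might also remark that $Q(\theta+\pi)=Q(\theta)$ forces $n$ to be even, so $m+k=n/2$ is indeed an integer.
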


\begin{theorem}\label{thm:no.finite.bound}
Let $\lambda$ be a positive irrational real number and consider
the following homogeneous cubic vector field parameterized by the
scalar $\theta$: \scalefont{.82}
\begin{equation}\label{eq:a.s.cubic.vec.arbitrary.high.Lyap}
\begin{pmatrix}\dot{x} \\ \dot{y}\end{pmatrix} =\begin{pmatrix}\cos(\theta) & -\sin(\theta)\\\sin(\theta)&\ \ \  \cos(\theta)  \end{pmatrix} \begin{pmatrix}-2\lambda y(x^2+y^2)-2y(2x^2+y^2) \\
\ \ 4\lambda x(x^2+y^2)+2x(2x^2+y^2)
\end{pmatrix}.
\end{equation} \normalsize
Then for any even degree $d$ of a candidate polynomial Lyapunov
function, there exits a $\theta>0$ small enough such that the
vector field in (\ref{eq:a.s.cubic.vec.arbitrary.high.Lyap}) is
asymptotically stable but does not admit a polynomial Lyapunov
function of degree $\leq d$.
\end{theorem}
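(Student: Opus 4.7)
The plan is a perturbation-and-compactness argument that uses Proposition~\ref{prop:Bacciotti.Rosier} as the source of the contradiction. Two facts must be shown: (a) for every sufficiently small $\theta>0$ the system~(\ref{eq:a.s.cubic.vec.arbitrary.high.Lyap}) is asymptotically stable, and (b) for each fixed even $d$, there is $\theta^{\star}(d)>0$ such that no polynomial of degree at most $d$ can be a strict Lyapunov function for $0<\theta<\theta^{\star}(d)$.

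For (a), write $R(\theta)=\cos\theta\,I+\sin\theta\,J$ with $J=\begin{pmatrix}0&-1\\1&\ \ 0\end{pmatrix}$, so that $f_\theta=\cos\theta\,f_0+\sin\theta\,Jf_0$, where $f_0$ denotes the field in~(\ref{eq:Bacciotti.Rosier.f0}). Since the origin is a center for $f_0$ and $f_0$ is homogeneous of degree~$3$, every nontrivial trajectory is a closed orbit (closed orbits near the origin extend globally by the scaling symmetry of a homogeneous vector field). In polar coordinates $f_0$ has the form $\dot r=r^3 g(\phi)$, $\dot\phi=r^2 h(\phi)$ with $h>0$ and $\int_0^{2\pi}\!g/h\,d\phi=0$, and integrating yields a smooth, positive, radially unbounded function $W(r,\phi)=r\exp\!\bigl(-\int_0^\phi g(\psi)/h(\psi)\,d\psi\bigr)$ that is conserved along $f_0$. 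The identity $\langle\nabla W,f_0\rangle\equiv 0$ forces $\nabla W$ and $Jf_0$ to be parallel at every nonzero point, so $\langle\nabla W,Jf_0\rangle$ has constant sign on $\mathbb{R}^2\setminus\{0\}$; a check at $(1,0)$ (where $f_0=(0,4\lambda+4)$ and $Jf_0=(-(4\lambda+4),0)$) shows the sign is negative. Hence $\dot W|_{f_\theta}=\sin\theta\,\langle\nabla W,Jf_0\rangle<0$ for $\theta\in(0,\pi)$, so $W$ is a strict global Lyapunov function.

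For (b), fix an even $d$ and argue by contradiction: suppose there exist $\theta_n\downarrow 0$ and polynomials $V_n$ of degree $\leq d$, each a strict Lyapunov function for $f_{\theta_n}$ with $V_n(0)=0$. The space $\mathcal{P}_d$ of polynomials of degree at most $d$ vanishing at the origin is finite dimensional; rescale each $V_n$ (by a positive scalar, which preserves the Lyapunov inequalities) to have unit coefficient norm, and extract a subsequence $V_n\to V_0$ in $\mathcal{P}_d$. Then $V_0\in\mathcal{P}_d$ has unit coefficient norm, $V_0(0)=0$, and passing the two pointwise Lyapunov inequalities to the limit (with $\theta_n\to 0$, so $f_{\theta_n}\to f_0$) yields
\begin{equation*}
V_0(x,y)\geq 0\quad\text{and}\quad \dot V_0|_{f_0}(x,y)\leq 0\quad\text{for all }(x,y)\in\mathbb{R}^2.
\end{equation*}
On any closed orbit of $f_0$, the integral of $\dot V_0|_{f_0}$ over one period equals $V_0(\gamma(T))-V_0(\gamma(0))=0$; combined with $\dot V_0|_{f_0}\leq 0$ pointwise, the integrand must vanish identically along every orbit. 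Since the closed orbits fill $\mathbb{R}^2\setminus\{0\}$ and $\dot V_0|_{f_0}$ is a polynomial, $\dot V_0|_{f_0}\equiv 0$ on all of $\mathbb{R}^2$. Proposition~\ref{prop:Bacciotti.Rosier} (for irrational $\lambda$) then forces $V_0$ to be a constant polynomial, and $V_0(0)=0$ gives $V_0\equiv 0$, contradicting unit coefficient norm.

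The main obstacle is part~(a): because by part~(b) no low-degree polynomial can serve as Lyapunov function, one must exhibit a \emph{non-polynomial} strict global Lyapunov function for $f_\theta$, which is done by exploiting the explicit non-polynomial conserved quantity $W$ of the center $f_0$ and then arguing that the rotation $R(\theta)$ breaks the conservation in a sign-definite way. Part~(b) reduces to finite-dimensional compactness on the coefficient space; the only delicate point is choosing a normalization that simultaneously enforces $V_n(0)=0$ and ensures the limit $V_0$ is nonzero, which is why we normalize coefficient norm \emph{after} discarding the (necessarily zero) constant term.
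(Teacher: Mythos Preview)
Your proof is correct and follows the same overall architecture as the paper's: exhibit a non-polynomial Lyapunov function for all $\theta\in(0,\pi)$ to establish asymptotic stability, then run a compactness argument on the coefficient space and invoke Proposition~\ref{prop:Bacciotti.Rosier} after showing the limit is a first integral of $f_0$ via the periodic-orbit trick.

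The one notable difference is in part~(a). The paper simply writes down the explicit Lyapunov function $V(x,y)=(2x^2+y^2)^\lambda(x^2+y^2)$ and computes $\dot V=-\sin\theta\,(2x^2+y^2)^{\lambda-1}|f_0|^2$ by hand. You instead \emph{derive} a first integral $W$ of $f_0$ from the polar form $\dot r=r^3g(\phi)$, $\dot\phi=r^2h(\phi)$, and then argue abstractly that $\nabla W\parallel Jf_0$ forces $\langle\nabla W,Jf_0\rangle$ to have constant sign. Carrying out your integral actually gives $W^{2(\lambda+1)}$ proportional to the paper's $V$, so the two Lyapunov functions coincide up to a power. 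Your route is more principled (it explains where the Lyapunov function comes from and would work for any homogeneous center with $h>0$), while the paper's is shorter since it skips straight to the answer. For part~(b), the paper first reduces to \emph{homogeneous} Lyapunov functions of a common degree via an LCM trick and normalizes by unit area on the sphere, whereas you work directly in the space of all polynomials of degree $\le d$ vanishing at the origin with unit coefficient norm; both normalizations serve the same purpose of keeping the limit nonzero, and your periodic-orbit justification for $\dot V_0\equiv 0$ is spelled out a bit more carefully than the paper's.
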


\begin{proof}
Consider the (non-polynomial) positive definite Lyapunov function
\begin{equation}\nonumber
V(x,y)=(2x^2+y^2)^\lambda(x^2+y^2)
\end{equation}
whose derivative along the trajectories of
(\ref{eq:a.s.cubic.vec.arbitrary.high.Lyap}) is equal to
$$\dot{V}(x,y)=-\sin(\theta)(2x^2+y^2)^{\lambda-1}(\dot{x}^2+\dot{y}^2).$$ Since
$\dot{V}$ is negative definite for $0<\theta<\pi$, it follows that
for $\theta$ in this range, the origin of
(\ref{eq:a.s.cubic.vec.arbitrary.high.Lyap}) is asymptotically
stable.

To establish the claim in the theorem, suppose for the sake of
contradiction that there exists an upper bound $\bar{d}$ such that
for all $0<\theta<\pi$ the system admits a (homogeneous)
polynomial Lyapunov function of degree $d(\theta)$ with
$d(\theta)\leq\bar{d}$. Let $\hat{d}$ be the least common
multiplier of the degrees $d(\theta)$ for $0<\theta<\pi$. (Note
that $d(\theta)$ can at most range over all even positive integers
less than or equal to $\bar{d}$.) Since positive powers of
Lyapunov functions are valid Lyapunov functions, it follows that
for every $0<\theta<\pi$, the system admits a homogeneous
polynomial Lyapunov function $W_\theta$ of degree $\hat{d}$. By
rescaling, we can assume without loss of generality that all
Lyapunov functions $W_\theta$ have unit area on the unit sphere.
Let us now consider the sequence $\{W_\theta\}$ as
$\theta\rightarrow 0$. We think of this sequence as residing in a
compact subset of \aaa{$\mathbb{R}^{\hat{d}+1}$}
associated with the set $P_{2,\hat{d}}$ of (coefficients of) all
nonnegative bivariate homogeneous polynomials of degree $\hat{d}$
with unit area on the unit sphere. Since every bounded sequence
has a converging subsequence, it follows that there must exist a
subsequence of $\{W_\theta\}$ that converges (in the coefficient
sense) to some polynomial $W_0$ belonging to $ P_{2,\hat{d}}$.
Since convergence of this subsequence also implies convergence of
the associated gradient vectors, we get that
$$\dot{W}_0(x,y)=\frac{\partial{W}_0}{\partial{x}}\dot{x}+\frac{\partial{W}_0}{\partial{y}}\dot{y}\leq0.$$
On the other hand, when $\theta=0$, the vector field in
(\ref{eq:a.s.cubic.vec.arbitrary.high.Lyap}) is the same as the
one in (\ref{eq:Bacciotti.Rosier.f0}) and hence the trajectories
starting from any nonzero initial condition go on periodic orbits.
This however implies that $\dot{W}=0$ everywhere and in view of
Proposition~\ref{prop:Bacciotti.Rosier} we have a contradiction.
\end{proof}

\begin{remark}
Unlike the result in~\cite[Prop.
5.2]{Bacciotti.Rosier.Liapunov.Book}, it is easy to establish the
result of Theorem~\ref{thm:no.finite.bound} without having to use
irrational coefficients in the vector field. One approach is to
take an irrational number, e.g. $\pi$, and then think of a
sequence of vector fields given by
(\ref{eq:a.s.cubic.vec.arbitrary.high.Lyap}) that is parameterized
by both $\theta$ and $\lambda$. We let the $k$-th vector field in
the sequence have $\theta_k=\frac{1}{k}$ and $\lambda_k$ equal to
a rational number representing $\pi$ up to $k$ decimal digits.
Since in the limit as $k\rightarrow\infty$ we have
$\theta_k\rightarrow 0$ and $\lambda_k\rightarrow \pi$, it should
be clear from the proof of Theorem~\ref{thm:no.finite.bound} that
for any integer $d$, there exists an AS bivariate homogeneous
cubic vector field with \emph{rational} coefficients that does not
have a polynomial Lyapunov function of degree less than $d$.
\end{remark}

\subsection{Lack of monotonicity in the degree of polynomial Lyapunov
functions}\label{sec:no.monotonicity.in.degree}  

In the last part of this section, we point out yet another difficulty associated with polynomial Lyapunov functions: lack of monotonicity in their degree. \aaa{Among other scenarios, this issue can arise when one seeks homogeneous polynomial Lyapunov functions (e.g., when analyzing stability of homogeneous vector fields). Note that unlike the case of (non-homogeneous) polynomials, the set of homogeneous polynomials of degree $d$ is not a subset of the set of homogeneous polynomials of degree $\geq d$.}

If a dynamical system admits a quadratic Lyapunov function $V$, then it also admits a polynomial Lyapunov function of any higher even
degree (e.g., simply given by $V^k$ for $k=2,3,\ldots$). However,
our next theorem shows that for homogeneous systems that do not
admit a quadratic Lyapunov function, such a monotonicity property
in the degree of polynomial Lyapunov functions may not hold.

\begin{theorem}\label{thm:no.monotonicity}
Consider the following homogeneous cubic vector field
parameterized by the scalar $\theta$:
\begin{equation}\label{eq:non.monotonicity.vec.field}
\begin{pmatrix}\dot{x} \\ \dot{y}\end{pmatrix} =\aaa{-}\begin{pmatrix}\sin(\theta) & \ \ \aaa{-}\cos(\theta)\\\cos(\theta)&\sin(\theta)  \end{pmatrix} \begin{pmatrix}x^3 \\
y^3
\end{pmatrix}.
\end{equation}
There exists a range of values for the parameter $\theta>0$ for
which the vector field is asymptotically stable, has no
homogeneous polynomial Lyapunov function of degree $6$, but admits
a homogeneous polynomial Lyapunov function of degree $4$.
\end{theorem}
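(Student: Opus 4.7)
The plan is to verify the two halves of the statement using (i) an explicit degree-4 Lyapunov function that works on an entire range of $\theta$, and (ii) a compactness/limit argument in the same spirit as the proof of Theorem~\ref{thm:no.finite.bound} to rule out degree-6 Lyapunov functions for $\theta$ sufficiently small.

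For the first half, I would check by direct computation that $V(x,y) = x^4 + y^4$ is a degree-4 Lyapunov function for every $\theta \in (0, \pi)$. With the given dynamics the cross terms involving $\cos\theta$ cancel and one obtains
\begin{equation*}
\dot V = 4x^3 \dot x + 4y^3 \dot y = -4\sin\theta\,(x^6 + y^6),
\end{equation*}
which is negative definite on this range. So for every $\theta \in (0,\pi)$ the system is asymptotically stable and a degree-4 homogeneous polynomial Lyapunov function exists.

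For the second half, the crucial observation is that at $\theta = 0$ the dynamics specialize to the Hamiltonian system $\dot x = y^3,\ \dot y = -x^3$ with conserved Hamiltonian $H(x,y) = (x^4 + y^4)/4$; every nontrivial trajectory is a closed periodic orbit. I would then mimic the proof of Theorem~\ref{thm:no.finite.bound}: suppose towards contradiction that for a sequence $\theta_n \to 0^+$ there exist homogeneous polynomial Lyapunov functions $W_{\theta_n}$ of degree $6$. Rescale each to have unit area on the unit circle, so that the coefficient vectors sit in a compact subset of $\mathbb{R}^7$; extract a convergent subsequence $W_{\theta_n} \to W_0$. Then $W_0$ is a nonzero nonnegative homogeneous polynomial of degree $6$ whose derivative along the $\theta = 0$ flow is nonpositive. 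Since $W_0$ is bounded and nonincreasing along closed periodic orbits, it must be constant on each orbit, and hence $\dot W_0 = y^3 \partial_x W_0 - x^3 \partial_y W_0 \equiv 0$.

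To reach a contradiction, it remains to show that no nonzero homogeneous polynomial of degree $6$ is annihilated by the differential operator $y^3 \partial_x - x^3 \partial_y$. This is a short exercise: writing $W_0 = \sum_{i+j=6} c_{ij} x^i y^j$ and comparing coefficients of all $9$ monomials of degree $8$ in $y^3 \partial_x W_0 - x^3 \partial_y W_0$, one finds a triangular linear system forcing every $c_{ij} = 0$. Conceptually, by the method of characteristics the polynomial solutions of this PDE are polynomials in $H = x^4 + y^4$, and a homogeneous polynomial in $H$ has degree a multiple of $4$, which $6$ is not. I expect the compactness/limit step to transfer verbatim from Theorem~\ref{thm:no.finite.bound}; the only genuinely new ingredient is this degree parity obstruction, which is what distinguishes degrees $6$ and $4$ in the limiting Hamiltonian system and is the main conceptual obstacle in the argument.
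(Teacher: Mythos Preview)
Your proposal is correct and follows essentially the same approach as the paper: the explicit quartic Lyapunov function $V=x^4+y^4$, the compactness/limit argument borrowed from Theorem~\ref{thm:no.finite.bound}, and the direct linear-algebra verification that no nonzero sextic is a first integral of the limiting Hamiltonian flow. The only difference is cosmetic: the paper first exploits the equivariance $(x,y)\mapsto(y,-x)$ of the vector field to reduce the candidate sextic to four even monomials before computing, whereas you work with all seven coefficients directly and supplement this with the pleasant degree-parity observation via the method of characteristics.
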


\begin{proof}
Consider the positive definite Lyapunov function
\begin{equation}\label{eq:V=x^4+y^4}
V(x,y)=x^4+y^4.
\end{equation}
The derivative of this Lyapunov function is given by
$$\dot{V}(x,y)=-4\sin(\theta)(x^6+y^6),$$ which is negative definite for $0<\theta<\pi$.
Therefore, when $\theta$ belongs to this range, the origin of
(\ref{eq:a.s.cubic.vec.arbitrary.high.Lyap}) is asymptotically
stable and the system admits the degree $4$ Lyapunov function
given in (\ref{eq:V=x^4+y^4}). On the other hand, we claim that
for $\theta$ small enough, the system \aaa{does not} admit a degree $6$
(homogeneous) polynomial Lyapunov function. To argue by
contradiction, we suppose that for arbitrarily small and positive
values of $\theta$ the system admits sextic Lyapunov functions
$W_\theta$. Since the vector field satisfies the symmetry \aaa{(equivariance),}
\begin{equation}\nonumber
\begin{pmatrix}\dot{x}(y,-x) \\ \dot{y}(y,-x)\end{pmatrix} =\begin{pmatrix}0 & 1\\-1&0  \end{pmatrix} \begin{pmatrix}\dot{x} \\
\dot{y}
\end{pmatrix},
\end{equation}
we can assume that the Lyapunov functions $W_\theta$ satisfy the
symmetry $W_\theta(y,-x)=W_\theta(x,y)$. \footnote{To see this,
note that any Lyapunov function $V_\theta$ for this system can be
made into one satisfying this symmetry by letting
$W_\theta(x,y)=V_\theta(x,y)+V_\theta(y,-x)+V_\theta(-x,-y)+V_\theta(-y,x)$.}
This means that $W_\theta$ can be parameterized with no odd
monomials, i.e., in the form
\begin{equation}\nonumber
W_\theta(x,y)=c_1x^6+c_2x^2y^4+c_3x^4y^2+c_4y^6,
\end{equation}
where the coefficients $c_1,\ldots,c_4$ \aaa{may depend on} $\theta$. Since by our assumption $\dot{W}_\theta$
is negative definite for $\theta$ arbitrarily small, an argument
identical to the one used in the proof of
Theorem~\ref{thm:no.finite.bound} implies that as
$\theta\rightarrow 0$, $W_\theta$ converges to a nonzero sextic
homogeneous polynomial $W_0$ whose derivative $\dot{W}_0$ along
the trajectories of (\ref{eq:non.monotonicity.vec.field}) (with
$\theta=0$) is non-positive. However, note that when $\theta=0$,
the trajectories of (\ref{eq:non.monotonicity.vec.field}) go on
periodic orbits tracing the level sets of the function $x^4+y^4$.
This implies that
$$\dot{W}_0=\frac{\partial{W_0}}{\partial{x}}y^3+\frac{\partial{W_0}}{\partial{y}}(-x^3)=0.$$
If we write out this equation, we obtain \scalefont{.92}
\begin{equation}\nonumber
\dot{W}_0=(6c_1-4c_2)x^5y^3+2c_2xy^7-2c_3x^7y+(4c_3-6c_4)x^3y^5=0,
\end{equation}\normalsize
which implies that $c_1=c_2=c_3=c_4=0$, hence a contradiction.
\end{proof}

\begin{remark}
We have numerically computed the range $0<\theta<\aaa{0.0267}$, for
which the conclusion of Theorem~\ref{thm:no.monotonicity} holds.
This bound has been computed via sum of squares relaxation and
semidefinite programming (SDP) by using the SDP solver
SeDuMi~\cite{sedumi}. What allows the search for a Lyapunov
function for the vector field in
(\ref{eq:non.monotonicity.vec.field}) to be exactly cast as a
semidefinite program is the fact that all nonnegative bivariate
forms are sums of squares.
\end{remark}

\section{(Non)-existence of sum of squares Lyapunov
functions}\label{sec:(non)-existence.sos.lyap}

In this section, we suppose that the polynomial vector field at
hand admits a polynomial Lyapunov function, and we would like to
investigate whether such a Lyapunov function can be found with sos
programming. In other words, we would like to see whether the
constrains in (\ref{eq:V.SOS}) and (\ref{eq:-Vdot.SOS}) are more
conservative than the true Lyapunov inequalities in
(\ref{eq:V.positive}) and (\ref{eq:Vdot.negative}).


In 1888, Hilbert~\cite{Hilbert_1888} showed that for polynomials
in $n$ variables and of degree $d$, the notions of nonnegativity
and sum of squares are equivalent if and only if $n=1$, $d=2$, or
$(n,d)$=$(2,4)$. A homogeneous version of the same result states
that nonnegative homogeneous polynomials in $n$ variables and of
degree $d$ are sums of squares if and only if $n=2$, $d=2$, or
$(n,d)$=$(3,4)$. The first explicit example of a nonnegative
polynomial that is not sos is due to Motzkin~\cite{MotzkinSOS} and
appeared nearly 80 years after the paper of Hilbert; see the
survey in~\cite{Reznick}. Still today, finding examples of such
polynomials is a challenging task, especially if additional
structure is required on the polynomial; see
e.g.~\cite{AAA_PP_not_sos_convex_journal}. This itself is a
premise for the powerfulness of sos techniques at least in low
dimensions and degrees.

\begin{remark}\label{rmk:Lyap.is.different}
Existence of nonnegative polynomials that are not sums of squares
does not imply on its own that the sos conditions in
(\ref{eq:V.SOS}) and (\ref{eq:-Vdot.SOS}) are more conservative
than the Lyapunov inequalities in (\ref{eq:V.positive}) and
(\ref{eq:Vdot.negative}). Since Lyapunov functions are not in
general unique, it could happen that within the set of valid
polynomial Lyapunov functions of a given degree, there is always
at least one that satisfies the sos conditions (\ref{eq:V.SOS})
and (\ref{eq:-Vdot.SOS}). Moreover, many of the known examples of
nonnegative polynomials that are not sos have multiple zeros and
local minima~\cite{Reznick} and therefore cannot serve as Lyapunov
functions. Indeed, if a function has a local minimum other than
the origin, then its value evaluated on a trajectory starting from
the local minimum would not be decreasing.
\end{remark}

\subsection{A motivating example}\label{subsec:motivating.example}

\aaa{The following is a concrete example of the use of sum of squares techniques for finding Lyapunov functions. It will also motivate the type of questions that we would like to study in this section; namely, if sos programming fails to find a polynomial Lyapunov function of a particular degree, then what are the different possibilities for existence of polynomial Lyapunov functions?}


\begin{example}\label{ex:poly8}
Consider the dynamical system
\begin{equation} \label{eq:poly8}
\begin{array}{lll}
\dot{x_{1}}&=&-0.15x_1^7+200x_1^6x_2-10.5x_1^5x_2^2-807x_1^4x_2^3\\
\ &\ &+14x_1^3x_2^4+600x_1^2x_2^5-3.5x_1x_2^6+9x_2^7
\\
\ &\ & \ \\
\dot{x_{2}}&=&-9x_1^7-3.5x_1^6x_2-600x_1^5x_2^2+14x_1^4x_2^3\\
\ &\ &+807x_1^3x_2^4-10.5x_1^2x_2^5-200x_1x_2^6-0.15x_2^7.
\end{array}
\end{equation}
A typical trajectory of the system that starts from the initial
condition $x_0=(2, 2)^T$ is plotted in Figure~\ref{fig:poly8}. Our
goal is to establish global asymptotic stability of the origin by
searching for a polynomial Lyapunov function. Since the vector
field is homogeneous, the search can be restricted to homogeneous
Lyapunov functions~\cite{Hahn_stability_book},~\cite{HomogHomog}.
To employ the sos technique, we can use the software package
SOSTOOLS~\cite{sostools} to search for a Lyapunov function
satisfying the sos conditions (\ref{eq:V.SOS}) and
(\ref{eq:-Vdot.SOS}). However, if we do this, we will not find any
Lyapunov functions of degree $2$, $4$, or $6$. If needed, a
certificate from the dual semidefinite program can be obtained,
which would prove that no polynomial of degree up to $6$ can
satisfy the sos requirements (\ref{eq:V.SOS}) and
(\ref{eq:-Vdot.SOS}).

At this point we are faced with the following question. Does the
system really not admit a Lyapunov function of degree $6$ that
satisfies the true Lyapunov inequalities in (\ref{eq:V.positive}),
(\ref{eq:Vdot.negative})? Or is the failure due to the fact that
the sos conditions in (\ref{eq:V.SOS}), (\ref{eq:-Vdot.SOS}) are
more conservative?

Note that when searching for a degree $6$ Lyapunov function, the
sos constraint in (\ref{eq:V.SOS}) is requiring a homogeneous
polynomial in $2$ variables and of degree $6$ to be a sum of
squares. The sos condition (\ref{eq:-Vdot.SOS}) on the derivative
is also a condition on a homogeneous polynomial in $2$ variables,
but in this case of degree $12$. (This is easy to see from
$\dot{V}=\langle \nabla V,f \rangle$.) Recall from our earlier discussion that nonnegativity and sum of squares are equivalent notions for homogeneous bivariate polynomials, irrespective of the degree. Hence, we now have a proof that this dynamical system truly does not have a Lyapunov function of degree $6$ (or lower).

This fact is perhaps geometrically intuitive.
Figure~\ref{fig:poly8} shows that the trajectory of this system is
stretching out in $8$ different directions. So, we would expect
the degree of the Lyapunov function to be at least $8$. Indeed,
when we increase the degree of the candidate function to $8$,
SOSTOOLS and the SDP solver SeDuMi~\cite{sedumi} succeed in
finding the following Lyapunov function:
\begin{eqnarray}\nonumber
V(x)&=&0.02x_1^8+0.015x_1^7x_2+1.743x_1^6x_2^2-0.106x_1^5x_2^3
\nonumber \\
 \ &\ &-3.517x_1^4x_2^4+0.106x_1^3x_2^5+1.743x_1^2x_2^6
 \nonumber\\
 \ &\ &-0.015x_1x_2^7+0.02x_2^8. \nonumber
\end{eqnarray}
\begin{figure}
\centering \scalebox{0.38} {\includegraphics{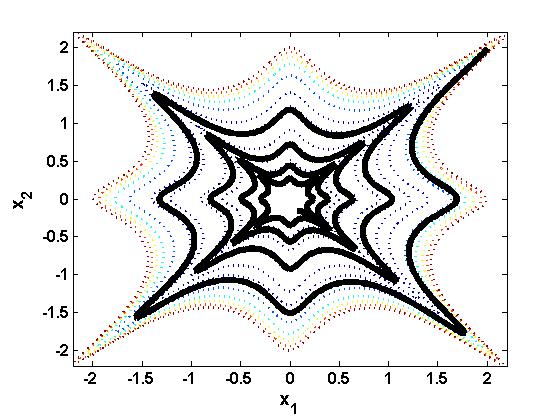}}
\caption{A typical trajectory of the vector filed in
Example~\ref{ex:poly8} (solid), level sets of a degree $8$
polynomial Lyapunov function (dotted).} \label{fig:poly8}
\end{figure}
The level sets of this Lyapunov function are plotted in
Figure~\ref{fig:poly8} and are clearly invariant under the
trajectory. $\triangle$
\end{example}

\subsection{A counterexample \aaa{to existence of sos Lyapunov functions}}\label{subsec:the.counterexample}
Unlike the scenario in the previous example, we now show that a
failure in finding a Lyapunov function of a particular degree via
sum of squares programming can also be due to the gap between
nonnegativity and sum of squares. What will be conservative in the
following counterexample is the sos condition on the
derivative.



Consider the dynamical system
\begin{equation} \label{eq:sos.conservative.dynamics}
\begin{array}{lll}
\dot{x_{1}}&=&-x_1^3x_2^2+2x_1^3x_2-x_1^3+4x_1^2x_2^2-8x_1^2x_2+4x_1^2 \\
\ &\ &-x_1x_2^4+4x_1x_2^3-4x_1+10x_2^2
\\ \ &\ & \ \\
\dot{x_{2}}&=&-9x_1^2x_2+10x_1^2+2x_1x_2^3-8x_1x_2^2-4x_1-x_2^3
\\
\ &\ &+4x_2^2-4x_2.
\end{array}
\end{equation}
One can verify that the origin is the only equilibrium point for
this system, and therefore it makes sense to investigate global
asymptotic stability. If we search for a quadratic Lyapunov
function for (\ref{eq:sos.conservative.dynamics}) using sos
programming, we will not find one. It will turn out that the
corresponding semidefinite program is infeasible. We will prove
shortly why this is the case, i.e, why no quadratic function $V$
can satisfy
\begin{equation}\label{eq:both.sos.conditions}
\begin{array}{rl}
V & \mbox{sos} \\
-\dot{V} & \mbox{sos.}
\end{array}
\end{equation}
Nevertheless, we claim that
\begin{equation}\label{eq:V.0.5x1^2+.5x2^2}
V(x)=\frac{1}{2}x_1^2+\frac{1}{2}x_2^2
\end{equation}
is a valid Lyapunov function. Indeed, one can check that
\begin{equation}\label{eq:Vdot.-Motzkin}
\dot{V}(x)=x_1\dot{x}_1+x_2\dot{x}_2=-M(x_1-1,x_2-1),
\end{equation}
where $M(x_1,x_2)$ is the Motzkin polynomial~\cite{MotzkinSOS}:
\begin{equation}\nonumber
M(x_1,x_2)=x_1^4x_2^2+x_1^2x_2^4-3x_1^2x_2^2+1.
\end{equation}
This polynomial is just a dehomogenized version of the Motzkin
form presented before, and it has the property of being
nonnegative but not a sum of squares. The polynomial $\dot{V}$ is
strictly negative everywhere, except for the origin and three
other points $(0,2)^{T}$, $(2,0)^{T}$, and $(2,2)^{T}$, where
$\dot{V}$ is zero. However, at each of these three points we have
$\dot{x}\neq0$. Once the trajectory reaches any of these three
points, it will be kicked out to a region where $\dot{V}$ is
strictly negative. Therefore, by LaSalle's invariance principle
(see e.g. \cite[p. 128]{Khalil:3rd.Ed}), the quadratic Lyapunov
function in (\ref{eq:V.0.5x1^2+.5x2^2}) proves global asymptotic
stability of the origin of (\ref{eq:sos.conservative.dynamics}).

\begin{figure}[h]
\begin{center}
    \mbox{
      \subfigure[Shifted Motzkin polynomial is nonnegative but not sos. \aaa{This polynomial is $-\dot{V}$; see (\ref{eq:Vdot.-Motzkin}).}]
      {\label{subfig:shifted.Motzkin}\scalebox{0.28}{\includegraphics{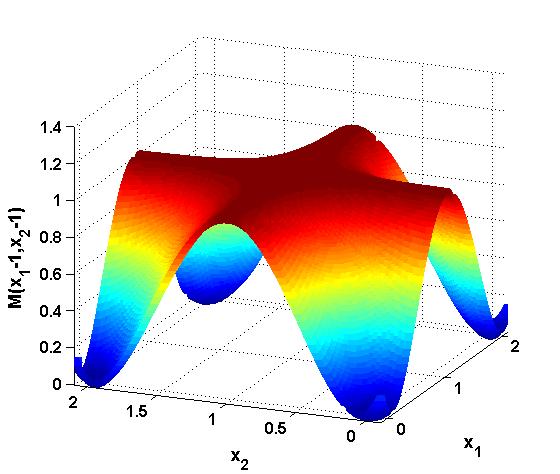}}}}
\mbox{
      \subfigure[Typical trajectories of (\ref{eq:sos.conservative.dynamics}) (solid), level sets of $V$ (dotted).]
      {\label{subfig:sos.hurt.trajec}\scalebox{0.25}{\includegraphics{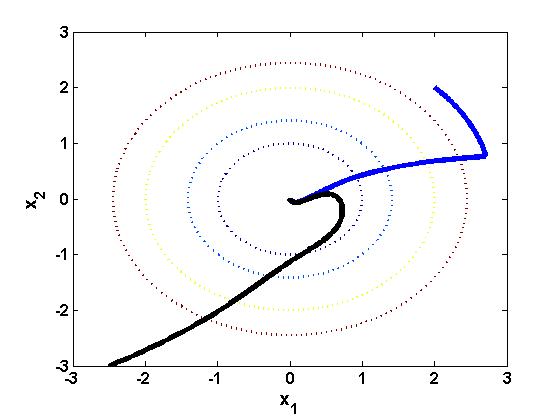}}}

      \subfigure[Level sets of a quartic Lyapunov function found through sos~programming.]
      {\label{subfig:sos.hurt.Lyap4}\scalebox{0.25}{\includegraphics{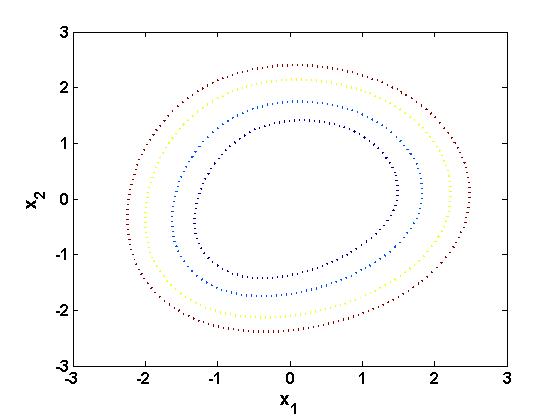}}} }

    \caption{The quadratic polynomial $\frac{1}{2}x_1^2+\frac{1}{2}x_2^2$ is a valid Lyapunov function for the vector field in (\ref{eq:sos.conservative.dynamics}) but it is not detected through sos programming.}
\label{fig:sos.hurt.Motzkin}
\end{center}
\end{figure}

The fact that $\dot{V}$ is zero at three points other than the
origin is not the reason why sos programming is failing. After
all, when we impose the condition that $-\dot{V}$ should be sos,
we allow for the possibility of a non-strict inequality. The
reason why our sos program does not recognize
(\ref{eq:V.0.5x1^2+.5x2^2}) as a Lyapunov function is that the
shifted Motzkin polynomial in (\ref{eq:Vdot.-Motzkin}) is
nonnegative but it is not a sum of squares. This sextic polynomial
is plotted in Figure~\ref{subfig:shifted.Motzkin}. Trajectories of
(\ref{eq:sos.conservative.dynamics}) starting at $(2,2)^{T}$ and
$(-2.5,-3)^{T}$ along with level sets of $V$ are shown in
Figure~\ref{subfig:sos.hurt.trajec}.

So far, we have shown that $V$ in (\ref{eq:V.0.5x1^2+.5x2^2}) is a
valid Lyapunov function but does not satisfy the sos conditions in
(\ref{eq:both.sos.conditions}). We still need to show why no other
quadratic Lyapunov function
\begin{equation}\label{eq:U(x).general.quadratic}
U(x)=c_1x_1^2+c_2x_1x_2+c_3x_2^2
\end{equation}
can satisfy the sos conditions either.\footnote{Since we can
assume that the Lyapunov function $U$ and its gradient vanish at
the origin, linear or constant terms are not needed in
(\ref{eq:U(x).general.quadratic}).} We will in fact prove the
stronger statement that $V$ in (\ref{eq:V.0.5x1^2+.5x2^2}) is the
only valid quadratic Lyapunov function for this system up to
scaling, i.e., any quadratic function $U$ that is not a scalar
multiple of $\frac{1}{2}x_1^2+\frac{1}{2}x_2^2$ cannot satisfy
$U\geq0$ and $-\dot{U}\geq0$. It will even be the case that no
such $U$ can satisfy $-\dot{U}\geq0$ alone. (The latter fact is to
be expected since global asymptotic stability of
(\ref{eq:sos.conservative.dynamics}) together with $-\dot{U}\geq0$
would automatically imply $U\geq0$; see~\cite[Theorem
1.1]{AAA_PP_ACC11_Lyap_High_Deriv}.)

So, let us show that $-\dot{U}\geq0$ implies $U$ is a scalar
multiple of $\frac{1}{2}x_1^2+\frac{1}{2}x_2^2$. Because Lyapunov
functions are closed under positive scalings, without loss of
generality we can take $c_1~=~1$. One can check that
$$-\dot{U}(0,2)=-80c_2,$$ so to have $-\dot{U}\geq0$, we need
$c_2\leq0$. Similarly, $$-\dot{U}(2,2)=-288c_1+288c_3,$$ which
implies that $c_3\geq1$. Let us now look at
\begin{equation}\label{eq:-Udot(x1,1)}
\begin{array}{lll}
-\dot{U}(x_1,1)&=& -c_2x_1^3 + 10c_2x_1^2 + 2c_2x_1 - 10c_2 -
2c_3x_1^2\\
\ &\ & + 20c_3x_1 + 2c_3 + 2x_1^2 - 20x_1.
\end{array}
\end{equation}
If we let $x_1\rightarrow -\infty$, the term $-c_2x_1^3$ dominates
this polynomial. Since $c_2\leq0$ and $-\dot{U}\geq0$, we conclude
that $c_2=0$. Once $c_2$ is set to zero in (\ref{eq:-Udot(x1,1)}),
the dominating term for $x_1$ large will be $(2-2c_3)x_1^2$.
Therefore to have $-\dot{U}(x_1,1)\geq0$ as
$x_1\rightarrow\pm\infty$ we must have $c_3\leq1$. Hence, we
conclude that $c_1=1, c_2=0, c_3=1$, and this finishes the proof.

Even though sos programming failed to prove stability of the
system in (\ref{eq:sos.conservative.dynamics}) with a quadratic
Lyapunov function, if we increase the degree of the candidate
Lyapunov function from $2$ to $4$, then SOSTOOLS succeeds in
finding a quartic Lyapunov function
\begin{eqnarray}\nonumber 
W(x)&=&0.08x_1^4-0.04x_1^3+0.13x_1^2x_2^2+0.03x_1^2x_2
\nonumber \\
 \ &\
 &+0.13x_1^2+0.04x_1x_2^2-0.15x_1x_2\nonumber
 \\
 \ &\ &+0.07x_2^4-0.01x_2^3+0.12x_2^2, \nonumber
\end{eqnarray}
which satisfies the sos conditions in
(\ref{eq:both.sos.conditions}). The level sets of this function
are close to circles and are plotted in
Figure~\ref{subfig:sos.hurt.Lyap4}.

Motivated by this example, it is natural to ask whether it is
always true that upon increasing the degree of the Lyapunov
function one will find Lyapunov functions that satisfy the sum of
squares conditions in (\ref{eq:both.sos.conditions}). In the next
subsection, we will prove that this is indeed the case, at least
for planar systems such as the one in this example, and also for
systems that are homogeneous.



\subsection{Converse sos Lyapunov
theorems}\label{subsec:converse.sos.results}

In~\cite{Peet.Antonis.converse.sos.CDC},
\cite{Peet.Antonis.converse.sos.journal}, it is shown that if a
system admits a polynomial Lyapunov function, then it also admits
one that is a sum of squares. However, the results there do not
lead to any conclusions as to whether the negative of the
derivative of the Lyapunov function is sos, i.e, whether condition
(\ref{eq:-Vdot.SOS}) is satisfied. As we remarked before, there is
therefore no guarantee that the semidefinite program can find such
a Lyapunov function. Indeed, our counterexample in the previous
subsection demonstrated this very phenomenon.

The proof technique used
in~\cite{Peet.Antonis.converse.sos.CDC},\cite{Peet.Antonis.converse.sos.journal}
is based on approximating the solution map using the Picard
iteration and is interesting in itself, though the actual
conclusion that a Lyapunov function that is sos exists has a far
simpler proof which we give in the next lemma.

\begin{lemma}\label{lem:W=V^2}
If a polynomial dynamical system has a positive definite
polynomial Lyapunov function $V$ with a negative definite
derivative $\dot{V}$, then it also admits a positive definite
polynomial Lyapunov function $W$ which is a sum of squares.
\end{lemma}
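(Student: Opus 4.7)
The plan is to simply take $W = V^2$. First, $W$ is trivially a sum of squares, as it is the square of a single polynomial $V$. Second, $W$ inherits positive definiteness from $V$: since $V(x) > 0$ for all $x \neq 0$ and $V(0) = 0$, we immediately have $W(x) = V(x)^2 > 0$ for all $x \neq 0$ and $W(0) = 0$. Finally, $W$ is a polynomial whenever $V$ is, so no smoothness or growth issues arise.

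For the Lyapunov inequality on the derivative, I would compute $\dot{W}$ along trajectories via the chain rule:
\[
\dot{W}(x) = \langle \nabla W(x), f(x)\rangle = 2 V(x)\, \langle \nabla V(x), f(x)\rangle = 2 V(x)\, \dot{V}(x).
\]
By the hypothesis of the lemma, $V(x) > 0$ and $\dot{V}(x) < 0$ for every $x \neq 0$, so the product $2V(x)\dot{V}(x)$ is strictly negative for $x \neq 0$, and vanishes at the origin. Hence $\dot{W}$ is negative definite, so $W$ is a valid Lyapunov function for the system.

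There is essentially no obstacle here; the only subtlety worth flagging is exactly the point made in the paragraph just preceding the lemma, namely that this construction produces a Lyapunov function $W$ that is sos but makes no claim that $-\dot{W}$ is sos. Indeed, $-\dot{W} = -2V\dot{V}$, and even though this polynomial is positive definite, writing it as a sum of squares would require knowing that the product $V \cdot (-\dot{V})$ of two positive definite polynomials is sos, which is not generally true. Thus $V^2$ suffices for the statement of the lemma as written, but recovers none of the stronger Definition~\ref{def:sos.Lyap} content; that stronger question is precisely what Theorems~\ref{thm:poly.lyap.then.sos.lyap} and~\ref{thm:poly.lyap.then.sos.lyap.PLANAR} will address using Scheiderer's Positivstellensatz.
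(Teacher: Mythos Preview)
Your proof is correct and matches the paper's own argument exactly: take $W=V^2$, observe it is trivially sos and positive definite, and compute $-\dot{W}=-2V\dot{V}$ to see the derivative is negative definite. Your caveat that $-\dot{W}$ need not be sos is also precisely the point the paper makes parenthetically in its one-line proof.
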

\begin{proof}
Take $W=V^2$. The negative of the derivative $-\dot{W}=-2V\dot{V}$
is clearly positive definite (though it may not be sos).
\end{proof}

We will next prove a converse sos Lyapunov theorem that guarantees
the derivative of the Lyapunov function will also satisfy the sos
condition, though this result is restricted to homogeneous
systems. The proof of this theorem relies on the following
Positivstellensatz result due to Scheiderer.

\begin{theorem}[Scheiderer,~\cite{Claus_Hilbert17}] \label{thm:claus}
Given any two positive definite homogeneous polynomials $p$ and
$q$, there exists an integer $k$ such that $pq^k$ is a sum of
squares.
\end{theorem}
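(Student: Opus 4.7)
The plan is to reduce the claim to a positivity question on a compact set, apply a standard Positivstellensatz to obtain an initial algebraic certificate, and then use a homogenization procedure to convert it into the multiplicative form $pq^k$ sos.

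First I would exploit the compactness that is automatic from positive definiteness of $q$. Because $q$ is homogeneous and strictly positive on $\mathbb{R}^n\setminus\{0\}$, the sublevel set $K=\{x\in\mathbb{R}^n : q(x)\le 1\}$ is compact and has nonempty interior, and $p$ (being positive definite) is bounded below by a strictly positive constant on $K$. Moreover, compactness of $K$ translates algebraically into the quadratic module generated by $1-q$ being Archimedean in $\mathbb{R}[x_1,\ldots,x_n]$ (indeed, $c-\|x\|^2$ belongs to this module for all sufficiently large $c$, because $q$ dominates a multiple of $\|x\|^{d_q}$). Consequently, Putinar's Positivstellensatz applies and yields sums of squares $\sigma_0,\sigma_1$ such that
\[
p(x)=\sigma_0(x)+\sigma_1(x)\bigl(1-q(x)\bigr).
\]
This identity already shows that, on $K$, $p$ is captured by $q$ and sos data; the remaining task is purely algebraic: converting it into a homogeneous certificate of the requested shape.

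Next I would homogenize. The heuristic is to evaluate the identity at a rescaled argument that normalizes $q$ to $1$: substituting $x\mapsto x/q(x)^{1/d_q}$ formally sends $1-q(x)$ to zero, so that $p(x)/q(x)^{d_p/d_q}=\sigma_0\bigl(x/q(x)^{1/d_q}\bigr)$. Multiplying through by a sufficiently high power of $q$ to clear fractional exponents should produce a genuine polynomial identity of the form $p\,q^k=\widetilde{\sigma}$ with $\widetilde{\sigma}$ a sum of squares. To make this rigorous one can instead decompose the Putinar identity by degree: after multiplying both sides by $q^N$ for large $N$, expand, and collect terms of degree $d_p+Nd_q$; the resulting top-degree homogeneous component equals $p\,q^k$ on the left, and one must show that its right-hand counterpart is a sum of squares of forms.

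The principal obstacle is this last homogenization step, where one must preserve the sum-of-squares property while extracting homogeneous components. It is not automatic that the homogeneous truncation of an sos polynomial is again sos, so a careful inductive argument or degree bookkeeping is needed. An alternative route, which I expect is what is actually required here, is to bypass the Putinar--then--homogenize strategy and invoke a homogeneous Positivstellensatz directly on the compact real projective variety $\{q=1\}$ (homeomorphic to $\mathbb{P}^{n-1}_{\mathbb{R}}$ under radial projection), where Scheiderer's own refined results on saturation of preorderings on compact real algebraic sets yield the sos representation of $p\,q^k$ in one step. Either way, the real technical content lies in matching the degrees so that the final certificate is both homogeneous and a sum of squares, and this is where the deep machinery of real algebraic geometry enters.
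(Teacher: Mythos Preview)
The paper does not prove this statement; Theorem~\ref{thm:claus} is quoted as an external result of Scheiderer and used as a black box in the proof of Theorem~\ref{thm:poly.lyap.then.sos.lyap}. There is therefore no ``paper's own proof'' to compare your attempt against.

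On the merits of the attempt itself: the Putinar-then-homogenize route has exactly the gap you flag, and you do not close it. From $p=\sigma_0+\sigma_1(1-q)$, multiplying by $q^N$ gives $p\,q^N=\sigma_0 q^N+\sigma_1 q^N-\sigma_1 q^{N+1}$; the left side is homogeneous of degree $d_p+Nd_q$, but Putinar furnishes no degree control on $\sigma_0,\sigma_1$, so the right side will generically have strictly larger degree, with the excess cancelling between terms. Your ``collect the degree-$(d_p+Nd_q)$ component'' step then asks for an \emph{intermediate} homogeneous component of an sos polynomial to be sos, which is false in general (only the top component is guaranteed sos, and here $p\,q^N$ is not the top component). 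The formal substitution $x\mapsto x/q(x)^{1/d_q}$ does not rescue this, since it leaves the ring of polynomials. Your fallback---appeal to Scheiderer's saturation results for preorderings on compact real varieties---is essentially circular: that machinery \emph{is} the content of the cited theorem, and proving it requires Scheiderer's local-global analysis of sums of squares, which goes well beyond Putinar plus degree bookkeeping. (Even the special case $q=\sum x_i^2$ is Reznick's uniform denominators theorem, whose proof is already nontrivial.)
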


\begin{theorem}\label{thm:poly.lyap.then.sos.lyap}
Given a homogeneous polynomial vector field, suppose there exists
a homogeneous polynomial Lyapunov function $V$ such that $V$ and
$-\dot{V}$ are positive definite. Then, there also exists a
homogeneous polynomial Lyapunov function $W$ such that $W$ is sos
and $-\dot{W}$ is sos.
\end{theorem}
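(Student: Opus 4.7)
The plan is to build $W$ as an even power of $V$, specifically $W = V^{2k+2}$ for a suitably chosen $k$, and then invoke Scheiderer's Positivstellensatz (Theorem~\ref{thm:claus}) to force $-\dot{W}$ to be sos. The key observation is that $W = (V^{k+1})^2$ is automatically sos for any $k$, so the only real task is to arrange things so that $-\dot{W}$, which equals $(2k+2)\,V^{2k+1}(-\dot{V})$, is sos.

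First I would note that both $V(-\dot{V})$ and $V^2$ are homogeneous and positive definite on $\mathbb{R}^n\setminus\{0\}$: positive definiteness follows because $V>0$ and $-\dot{V}>0$ off the origin by hypothesis, and homogeneity follows because $\dot{V}=\langle \nabla V, f\rangle$ is a product of homogeneous polynomials. Applying Theorem~\ref{thm:claus} with $p = V(-\dot{V})$ and $q = V^2$, I obtain an integer $k\ge 0$ such that
\begin{equation*}
p\cdot q^k \;=\; V(-\dot{V})\,V^{2k} \;=\; V^{2k+1}(-\dot{V})
\end{equation*}
is a sum of squares. The reason for choosing $q=V^2$ rather than the more obvious $q=V$ is precisely to ensure that the resulting exponent on $V$ is odd, which is exactly what is needed to make $W$ a perfect square while $-\dot{W}$ picks up the sos product just produced.

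I would then set $W \mathrel{\mathop:}= V^{2k+2}$. Homogeneity of $W$ is inherited from $V$; that $W$ is sos is immediate since $W = (V^{k+1})^2$. Differentiating along the vector field gives
\begin{equation*}
-\dot{W} \;=\; (2k+2)\,V^{2k+1}(-\dot{V}),
\end{equation*}
which is a positive scalar multiple of the sos polynomial produced by Scheiderer, hence sos. Both $W$ and $-\dot{W}$ remain positive definite (since $V$ and $-\dot{V}$ are), so $W$ is a bona fide Lyapunov function in addition to satisfying the two sos conditions in Definition~\ref{def:sos.Lyap}.

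The only subtle point, and the one I expect to be the main obstacle if approached naively, is the parity issue: a direct application of Scheiderer with $q=V$ would produce $V^{k}(-\dot{V})$ sos for some $k$ whose parity is not under our control, and if this $k$ is even, then the natural candidate $W=V^{k+1}$ would fail to be sos while $W=V^{k+2}$ would force us to prove $V^{k+1}(-\dot{V})$ sos, which Scheiderer does not directly give. The workaround above (absorbing one factor of $V$ into $p$ and taking $q = V^2$) bypasses this parity obstruction cleanly and is the essential trick.
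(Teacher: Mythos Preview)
Your proof is correct and essentially identical to the paper's: the paper applies Theorem~\ref{thm:claus} with $p = -2V\dot{V}$ and $q = V^2$ (the same choice as yours up to an irrelevant scalar), then sets $W = V^{2k+2}$ and verifies that both $W$ and $-\dot{W}$ are sos exactly as you do. Your explicit discussion of the parity obstruction and why taking $q = V^2$ resolves it is a nice addition that the paper omits.
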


\begin{proof}
Observe that $V^2$ and $-2V\dot{V}$ are both positive definite and
homogeneous polynomials. Applying Theorem~\ref{thm:claus} to these
two polynomials, we conclude the existence of an integer $k$ such
that $(-2V\dot{V})(V^2)^k$ is sos. Let $$W=V^{2k+2}.$$ Then, $W$
is clearly sos since it is a perfect even power. Moreover,
$$-\dot{W}=-(2k+2)V^{2k+1}\dot{V}=-(k+1)2V^{2k}V\dot{V}$$
is also sos by the previous claim.\footnote{Note that $W$
constructed in this proof proves GAS since $-\dot{W}$ is positive
definite and $W$ itself being homogeneous and positive definite is
automatically radially unbounded.}
\end{proof}

\aaa{Finally}, we develop a similar theorem that removes the homogeneity
assumption from the vector field, but instead is restricted to
vector fields on the plane. For this, we need another result of
Scheiderer.

\begin{theorem}[{Scheiderer,~\cite[Cor. 3.12]{Claus_3vars_sos}}] \label{thm:claus.3vars}
Let $p\mathrel{\mathop:}=p(x_1,x_2,x_3)$ and
$q\mathrel{\mathop:}=q(x_1,x_2,x_3)$ be two homogeneous
polynomials in three variables, with $p$ positive semidefinite and
$q$ positive definite. Then, there exists an integer $k$ such that
$pq^k$ is a sum of squares.
\end{theorem}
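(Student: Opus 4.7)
The plan is a local-to-global sum-of-squares argument organized around the real projective zero set of $p$. First, by compactness of $S^2 \subset \mathbb{R}^3$ and homogeneity, $q$ attains a strictly positive minimum on $S^2$, so the form $pq^k$ is psd on all of $\mathbb{R}^3$ for every $k \geq 0$; the content of the theorem is therefore the polynomial sos representation, not nonnegativity.

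The natural next step is to separate the analysis into two regions of the projective sphere $\mathbb{RP}^2$. Let $Z := \{p = 0\} \subset \mathbb{RP}^2$. Away from any open neighborhood $U$ of $Z$, $p$ is strictly positive on the corresponding closed piece of $S^2$ and hence is a positive definite form on that region. On this ``good'' region Theorem~\ref{thm:claus} already yields a local sos certificate after multiplying by a fixed power of $q$. Near each point $\alpha \in Z$, I would choose algebraic local coordinates on $\mathbb{RP}^2$ in which the vanishing of $p$ along $Z$ is put into a normal form (e.g.\ after a resolution of singularities of the real variety $\{p=0\}$), and verify that because $q$ is strictly positive on a neighborhood of $\alpha$, a sufficiently large power $q^{k_\alpha}$ absorbs the vanishing of $p$ and exposes $pq^{k_\alpha}$ as a local sum of squares in that chart.

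The main obstacle is then gluing these local sos certificates into a single global polynomial sos decomposition in $\mathbb{R}[x_1,x_2,x_3]$, with a uniform exponent $k$ that works simultaneously on all of $\mathbb{RP}^2$. In general this patching cannot be performed (Hilbert's ternary sextic counterexamples show that $p$ by itself need not be sos), and the key structural hypothesis one must exploit is that the ambient real projective space $\mathbb{RP}^2$ has real dimension two. In that low-dimensional regime, Scheiderer's Positivstellensatz machinery for saturations of finitely generated quadratic preorderings on real varieties of dimension at most two applies, and compactness of $\mathbb{RP}^2$ converts the finitely many local exponents $k_\alpha$ into one global $k$. This patching step, and in particular the invocation of the dimension-two Positivstellensatz, is the technically deep part of the argument and is where I expect the real work to lie.
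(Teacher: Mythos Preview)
The paper does not prove this statement. Theorem~\ref{thm:claus.3vars} is quoted verbatim from Scheiderer's work~\cite[Cor.~3.12]{Claus_3vars_sos} and is used as a black box in the proof of Theorem~\ref{thm:poly.lyap.then.sos.lyap.PLANAR}; no argument for it appears anywhere in the paper. So there is nothing to compare your proposal against here.

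As for the proposal itself: you have correctly identified the broad architecture of Scheiderer's method---a local-to-global sos principle that is special to real varieties of dimension at most two---and you are right that the patching step is where all the depth lies. But two points deserve caution. First, your appeal to Theorem~\ref{thm:claus} on the ``good region'' is not well posed: that theorem is a global statement about positive definite forms on all of $\mathbb{R}^n$, and restricting $p$ to the complement of a neighborhood of $Z$ does not give you a positive definite form to which it applies. Second, the sketch essentially defers the entire content of the theorem to the sentence ``Scheiderer's Positivstellensatz machinery \ldots\ applies''; since Theorem~\ref{thm:claus.3vars} \emph{is} one of the outputs of that machinery, this is close to assuming what you want to prove. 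A genuine proof would need to develop the local sos criteria at the (finitely many) real zeros of $p$ on $\mathbb{RP}^2$ and then carry out the gluing via the saturation/preordering arguments in~\cite{Claus_3vars_sos}, which is substantially more than what is written here.
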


\begin{theorem}\label{thm:poly.lyap.then.sos.lyap.PLANAR}
Given a (not necessarily homogeneous) polynomial vector field in
two variables, suppose there exists a positive definite polynomial
Lyapunov function $V,$ with $-\dot{V}$ positive definite, and such
that the highest \aaa{degree homogeneous component} of $V$ has no zeros\footnote{This
requirement is only slightly stronger than the requirement of
radial unboundedness, which is imposed on $V$ by Lyapunov's
theorem anyway.}. Then, there also exists a polynomial Lyapunov
function $W$ such that $W$ is sos and $-\dot{W}$ is sos.
\end{theorem}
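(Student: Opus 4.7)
The plan is to extend the argument used in Theorem~\ref{thm:poly.lyap.then.sos.lyap} by homogenizing everything to three variables and then invoking Scheiderer's Theorem~\ref{thm:claus.3vars}. The hard part, compared to the homogeneous case, is that the standard homogenization of $V$ is not positive definite on $\mathbb{R}^{3}$: it necessarily vanishes on the $x_0$-axis because $V(0,0)=0$. I will get around this by adding a multiple of $x_0^{N}$ to produce a positive definite auxiliary form, and then choose $W$ so that the extra factor arising from Scheiderer's result can be absorbed into its definition.

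First, observe that since $V$ is positive definite and its top homogeneous component has no zeros, the degree $N$ of $V$ must be even and $V$ is radially unbounded; likewise $M:=\deg(-\dot V)$ is even (a positive definite polynomial in two variables has even degree). Let
\[
\tilde V(x_0,x_1,x_2) = x_0^{N}\,V(x_1/x_0, x_2/x_0), \qquad \widetilde{(-\dot V)}(x_0,x_1,x_2) = x_0^{M}\,(-\dot V)(x_1/x_0, x_2/x_0)
\]
be the homogenizations; both are psd homogeneous forms in three variables, and $\tilde V$ vanishes exactly on the $x_0$-axis. Fix any $c>0$ and set
\[
\hat V(x_0,x_1,x_2) := \tilde V(x_0,x_1,x_2) + c\,x_0^{N}.
\]
Then $\hat V$ is a homogeneous polynomial of degree $N$ that is \emph{positive definite} on $\mathbb{R}^{3}$: off the $x_0$-axis $\tilde V>0$, while on the $x_0$-axis $\hat V(x_0,0,0)=c\,x_0^{N}>0$ for $x_0\neq 0$. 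Its dehomogenization is $\hat V(1,x_1,x_2)=V(x_1,x_2)+c$.

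Next, for any $m\geq 1$ (for concreteness take $m=1$), consider the two psd three-variable forms $p_A := \tilde V^{2m}\,\widetilde{(-\dot V)}$ and $p_B := \tilde V^{2m-1}\,\widetilde{(-\dot V)}$. Applying Theorem~\ref{thm:claus.3vars} with $q=\hat V^{2}$ (positive definite) to $p_A\hat V$ and $p_B\hat V$ (both psd) yields odd integers $2\ell_A+1$ and $2\ell_B+1$ for which $p_A\hat V^{2\ell_A+1}$ and $p_B\hat V^{2\ell_B+1}$ are sos. Padding the smaller exponent by an even power of $\hat V$ (a single square, hence sos) produces a common odd exponent $K$ such that both $p_A\hat V^{K}$ and $p_B\hat V^{K}$ are sos in three variables. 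Setting $n:=(K+1)/2$ and dehomogenizing at $x_0=1$ (which preserves the sos property), both
\[
V^{2m}(-\dot V)(V+c)^{2n-1} \quad \text{and} \quad V^{2m-1}(-\dot V)(V+c)^{2n-1}
\]
are sos polynomials in $(x_1,x_2)$.

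Finally, take $W := V^{2m}(V+c)^{2n} = \bigl(V^{m}(V+c)^{n}\bigr)^{2}$. This $W$ is a single square, hence sos; it vanishes at the origin, is positive definite, and is radially unbounded because $V$ is. A direct differentiation along the vector field gives
\[
-\dot W = 2(m+n)\,V^{2m}(-\dot V)(V+c)^{2n-1} + 2mc\,V^{2m-1}(-\dot V)(V+c)^{2n-1},
\]
which is a positive linear combination of the two sos polynomials exhibited above, and is therefore sos. Hence $W$ is a polynomial Lyapunov function (positive definite, radially unbounded, with negative definite derivative) meeting both sos conditions. The crux of the argument is the construction of the positive definite lift $\hat V$; the hypothesis that the top homogeneous component of $V$ has no zeros is exactly what makes the correction $c\,x_0^{N}$ effective, enabling Scheiderer's theorem to apply and the sos certificates to be pulled back from three to two variables.
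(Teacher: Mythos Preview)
Your proof is correct and rests on the same core idea as the paper: shift $V$ by a positive constant so that its homogenization becomes a positive definite ternary form, then invoke Scheiderer's three-variable Positivstellensatz (Theorem~\ref{thm:claus.3vars}) and dehomogenize. Indeed, your $\hat V$ is exactly the homogenization of $V+c$, so the two constructions coincide.

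The paper's execution is a bit cleaner, though. It takes $\tilde V=V+1$ from the start, observes that the homogenization of $\tilde V^{2}$ is positive definite and that of $-2\tilde V\dot{\tilde V}$ is positive semidefinite, applies Theorem~\ref{thm:claus.3vars} \emph{once}, and sets $W=(V+1)^{2k+2}$. Then $-\dot W=-(2k+2)(V+1)^{2k+1}\dot V$ is a single term and is sos after dehomogenization. Your choice $W=V^{2m}(V+c)^{2n}$ (with $m\ge 1$) forces two terms in $-\dot W$ and hence two separate invocations of Scheiderer plus the padding argument; setting $m=0$ collapses your construction to the paper's. What your variant buys is $W(0)=0$ rather than $W(0)=1$, but this is cosmetic for Lyapunov purposes.
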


\begin{proof}
Let $\tilde{V}=V+1$. So, $\dot{\tilde{V}}=\dot{V}$. Consider the
(non-homogeneous) polynomials $\tilde{V}^2$ and
$-2\tilde{V}\dot{\tilde{V}}$ in the variables
$x\mathrel{\mathop:}=(x_1,x_2)$. Let us denote the (even) degrees
of these polynomials respectively by $d_1$ and $d_2$. Note that
$\tilde{V}^2$ is nowhere zero and $-2\tilde{V}\dot{\tilde{V}}$ is
only zero at the origin. Our first step is to homogenize these
polynomials by introducing a new variable $y$. Observing that the
homogenization of products of polynomials equals the product of
homogenizations, we obtain the following two trivariate forms:
\begin{equation}\label{eq:V^2.homoegenized}
y^{2d_1}\tilde{V}^2(\textstyle{\frac{x}{y}}),
\end{equation}
\begin{equation}\label{eq:-2V.Vdot.homogenized}
-2y^{d_1}y^{d_2}\tilde{V}(\textstyle{\frac{x}{y}})\dot{\tilde{V}}(\textstyle{\frac{x}{y}}).
\end{equation}
Since by assumption the highest order term of $V$ has no zeros,
the form in (\ref{eq:V^2.homoegenized}) is positive definite. The
form in (\ref{eq:-2V.Vdot.homogenized}), however, is only positive
semidefinite. In particular, since $\dot{\tilde{V}}=\dot{V}$ has
to vanish at the origin, the form in
(\ref{eq:-2V.Vdot.homogenized}) has a zero at the point
$(x_1,x_2,y)=(0,0,1)$. Nevertheless, since
Theorem~\ref{thm:claus.3vars} allows for positive semidefiniteness
of one of the two forms, by applying it to the forms in
(\ref{eq:V^2.homoegenized}) and (\ref{eq:-2V.Vdot.homogenized}),
we conclude that there exists an integer $k$ such that
\begin{equation}\label{eq:-2V.Vdot.homog.*.V^2.homog^k}
-2y^{d_1(2k+1)}y^{d_2}\tilde{V}(\textstyle{\frac{x}{y}})\dot{\tilde{V}}(\textstyle{\frac{x}{y}})\tilde{V}^{2k}(\textstyle{\frac{x}{y}})
\end{equation}
is sos. Let $W=\tilde{V}^{2k+2}.$ Then, $W$ is clearly sos.
Moreover,
$$-\dot{W}=-(2k+2)\tilde{V}^{2k+1}\dot{\tilde{V}}=-(k+1)2\tilde{V}^{2k}\tilde{V}\dot{\tilde{V}}$$
is also sos because this polynomial is obtained from
(\ref{eq:-2V.Vdot.homog.*.V^2.homog^k}) by setting
$y=1$.\footnote{Once again, we note that the function $W$
constructed in this proof is radially unbounded, achieves its
global minimum at the origin, and has $-\dot{W}$ positive
definite. Therefore, $W$ proves global asymptotic stability.}
\end{proof}

\section{\aaa{Summary and} some open questions}\label{sec:summary.future.work}

\aaa{We studied the basic problem of testing stability of equilibrium points of polynomial differential equations and asked some basic questions: What is the computational complexity of this problem? What kind of ``converse Lyapunov theorems'' can we expect to establish on existence of polynomial Lyapunov functions, upper bounds on their degrees, and guaranteed success of techniques based on sum of squares relaxation and semidefinite programming for finding these Lyapunov functions? Our contributions to these questions are listed in Subsection~\ref{subsec:contributions}.

Some problems of interest that our work leaves open are listed below.}

\paragraph{Open questions regarding complexity.} Of course, the most interesting problem here is to formally answer the questions of Arnold
on undecidability of determining stability for polynomial vector fields. As far as NP-hardness is concerned, our work leaves open the question of establishing NP-hardness of testing asymptotic stability for \emph{quadratic} vector fields. (Recall that such a result cannot be restricted to homogeneous quadratic vector fields, but it is quite likely that the problem for all quadratic vector fields is hard.) \aaa{The complexity of all problems considered in Theorem~\ref{thm:poly.hardness.results} is also open for quadratic vector fields. In general, one can reduce the degree of any vector field to two by introducing polynomially many new variables (see~\cite{Undecidability_vec_fields_survey}). However, this operation may or may not preserve the property of the vector field which is of interest.}

\paragraph{Open questions regarding existence of (sos) polynomial Lyapunov functions.} Mark
Tobenkin asked whether globally exponentially stable polynomial
vector fields always admit polynomial Lyapunov functions. Our
counterexample with Krstic in Section~\ref{sec:no.poly.Lyap}, though GAS and
locally exponentially stable, is not globally exponentially stable
because of exponential growth rates in the large. The
counterexample of Bacciotti and Rosier
in~\cite{Bacciotti.Rosier.Liapunov.Book} is not even locally
exponentially stable. Another \aaa{problem left open} is to prove our conjecture that
GAS homogeneous polynomial vector fields admit homogeneous
polynomial Lyapunov functions. This, together with
Theorem~\ref{thm:poly.lyap.then.sos.lyap}, would imply that
asymptotic stability of homogeneous polynomial systems can always
be decided via sum of squares programming. Also, it is not clear
to us whether the assumption of homogeneity and planarity can be
removed from Theorems~\ref{thm:poly.lyap.then.sos.lyap}
and~\ref{thm:poly.lyap.then.sos.lyap.PLANAR} on existence of sos
Lyapunov functions. Finally, another research direction would be
to obtain upper bounds on the degree of polynomial Lyapunov functions when they do exist. Recall that our Theorem~\ref{thm:no.finite.bound} has already established that bounds depending only on dimension and degree of the vector field are impossible. So the questions is whether one can derive bounds that are computable from the coefficients of the vector field. Some degree bounds are known for Lyapunov analysis of locally exponentially stable systems~\cite{Peet.Antonis.converse.sos.journal}, but they depend on uncomputable properties of the solution such as convergence
rate. As far as sos Lyapunov functions are concerned, degree bounds on Positivstellensatz result of the type in Theorems~\ref{thm:claus} and~\ref{thm:claus.3vars} are known, but typically exponential in size and not very encouraging for practical purposes.

\section{Acknowledgements}
We are grateful to Claus Scheiderer for very helpful discussions around his Positivstellensatz results~\cite{Claus_Hilbert17},~\cite{Claus_3vars_sos}. He was kind \aaa{enough} to write down his unpublished result that we needed~\cite{Claus_Hilbert17}---a reference which now includes \aaa{many} other contributions. \aaa{Amir Ali Ahmadi is grateful to members of the MIT Robot Locomotion group, in particular to Anirudha Majumdar and Russ Tedrake for their contributions to some of the complexity results of this paper, and to Mark Tobenkin for inspiring discussions.}


\bibliographystyle{abbrv}
\bibliography{pablo_amirali}

\end{document}